\theoremstyle{plain}
\newtheorem{theorem}{Theorem}[section]
\newtheorem{lemma}[theorem]{Lemma}
\newtheorem{corollary}[theorem]{Corollary}
\newtheorem{proposition}[theorem]{Proposition}
\newtheorem{remark}[theorem]{Remark}
\theoremstyle{definition}
\newtheorem{definition}[theorem]{Definition}
\newcommand{\bbH}{\mathbb H}
\newcommand{\Tr}{\mathrm{Tr}}
\newcommand{\Rea}{\mathrm{Re}}
\title{Quaternionic stochastic areas on quaternionic full flag manifolds and applications}
\author{Fabrice Baudoin\footnote{Research partially supported by grant 10.46540/4283-00175B from Independent Research Fund Denmark and by the Villum Investigator grant \emph{Stochastic Analysis in Aarhus}. F.B. also acknowledges funding from the European Research Council (ERC) under the European Union’s Horizon Europe research and innovation programme (RanGe project, Grant Agreement No. 101199772).}, Teije Kuijper\footnote{Research was completed while visiting Purdue University.}, Jing Wang\footnote{Research was supported in part by NSF Grant DMS-2246817.}}
\begin{document}

\maketitle

\begin{abstract}
We show that a Brownian motion on the quaternionic full flag manifold can be represented as a matrix-valued diffusion obtained in a simple way from a symplectic Brownian motion.
By relating its radial dynamics to the Brownian motion on the quaternionic sphere, an explicit formula for the characteristic function of the joint distribution of the quaternionic stochastic areas is obtained.
The limit law of these quaternionic stochastic areas is shown to be a multivariate normal distribution with zero mean and non-diagonal covariance matrix.
These results are subsequently applied to establish new results about simultaneous quaternionic windings on the quaternionic spheres.
\end{abstract}

\tableofcontents

\newpage

\section{Introduction}

Stochastic area processes associated with Brownian motion on manifolds have played an
important role in probability on Lie groups, geometric analysis, and the study of integrable 
Brownian functionals arising from differential geometry.  Their analysis
began with Lévy's \cite{Levy1951} celebrated description of stochastic areas on the plane, and was later extended to the complex Hopf fibration by two of the authors in \cite{MR3719061}, and to the quaternionic Hopf fibration in the subsequent work \cite{baudoin2014}.  We refer to the recent monograph \cite{Baudoin2024-is} for an overview of the state of the art.

In the complex case, the contact geometry of the Hopf fibration and of complex flag manifolds provides a natural framework for defining and studying such functionals.  In particular, the stochastic area processes on the full complex flag manifold have recently  been introduced and studied in \cite{baudoin2025fullflag}.  The quaternionic
case, although analogous at a formal level, presents a fundamentally different set of
challenges.  The absence of a non-trivial canonical quaternionic calculus (see \cite{Buff1973}) means that
quaternionic Itō-type computations quickly become intractable.  While non-standard
quaternionic calculi do exist, see for example \cite{Dzagnidze2012}, they do not lend themselves naturally to the analysis of Brownian motions on quaternionic manifolds.  As a consequence, explicit stochastic area formulas beyond the quaternionic Hopf fibration have remained largely unexplored.

One of the aims of this paper is to initiate the theory of quaternionic stochastic areas to the setting of quaternionic flag manifolds, with emphasis on the \emph{quaternionic full flag manifold} which parametrizes nested sequences of quaternionic subspaces
\[
\{0\} \subsetneq W_1 \subsetneq \cdots \subsetneq W_{n-1} \subsetneq \mathbb{H}^n.
\]
Quaternionic flag manifolds arise naturally in algebraic topology \cite{MARE20082830, Mare_Willems_2009}, multisymplectic geometry \cite{FOTH2002330, Foth2003} and even  image processing \cite{wang2025colorimagesetrecognition}.
The full flag manifold plays a particularly important role, since all quaternionic partial flag manifolds may be obtained from it by canonical Riemannian submersions.  

The identification
\[
F_{1,2,\dots,n-1}(\mathbb{H}^n)
  \simeq \mathrm{Sp}(n)/\mathrm{Sp}(1)^n
\]
as a Riemannian homogeneous space provides a representation of the full flag well suited to stochastic analysis computations. Our first contribution is the construction of Brownian motion on the quaternionic full
flag manifold using the symplectic Brownian motion on $\mathrm{Sp}(n)$ and its projection under the Riemannian submersion
\[
\mathrm{Sp}(1)^n \longrightarrow \mathrm{Sp}(n) \longrightarrow F_{1,2,\dots,n-1}(\mathbb{H}^n).
\]
Our approach, and this is one of the main technical novelties compared to \cite{baudoin2014} of the paper, avoids quaternionic Itō's formula computations entirely.  Instead, we only work with the geometry of the above fibration.  For instance, a key
feature of this construction is that the projection of a symplectic Brownian motion onto its last row yields a Brownian motion on the quaternionic sphere $\mathbb{S}^{4n-1}$.  This
representation allows us to identify the \emph{radial} dynamics of the flag Brownian motion
explicitly and we find that the squared radial variables form a Jacobi diffusion on the simplex of index
$(3/2,\dots,3/2)$.  

We then introduce the \emph{quaternionic stochastic area processes} associated with Brownian
motion on the flag manifold.  These are defined using the connection form of the above
fibration and generalize the quaternionic area process on the Hopf fibration.  Using a
local trivialization compatible with the quaternionic affine coordinates on the flag
manifold, we show that the horizontal lift of Brownian motion admits a skew-product
representation whose vertical component is described precisely by these area processes.
This also yields an interesting and new skew-product decomposition for the symplectic Brownian motion itself.

With this in place, we derive an explicit formula for the joint characteristic
function of the quaternionic stochastic areas.  The computation combines the above
skew-product representation, the spectral decomposition of Jacobi semigroups on simplices,
and a Yor-type exponential transform adapted to the quaternionic setting.  The resulting
expression reveals non-trivial interactions between the components of the area vector,
reflecting the inherent non-commutativity of quaternionic multiplication.

Our second main result is the description of the large-time behaviour of the quaternionic
stochastic areas.  After an appropriate renormalization, the vector of area processes
converges in distribution to a centered multivariate normal law with explicitly computable,
non-diagonal covariance matrix.  Unlike in the complex case, where the limiting components
are independent, the quaternionic limit exhibits a full correlation structure governed by
the geometry of the fibration.  This is one of the striking manifestations of the difference between
complex and quaternionic stochastic areas.

Finally, we apply these results to the study of \emph{simultaneous quaternionic winding} of
Brownian motion on the sphere $\mathbb{S}^{4n-1}\subset\mathbb{H}^n$.  By expressing winding
functionals in terms of the quaternionic stochastic areas, we obtain a multivariate central limit theorem for the winding on the quaternionic sphere.
This extends and generalises the classical results for real and complex windings and provides the first such result in the
quaternionic setting beyond the Hopf fibration.

\medskip

\noindent

The paper is organized as follows: section \ref{sec:preliminairies} briefly reviews quaternions, the compact symplectic group and the quaternionic full flag manifolds, it also constructs a Brownian motion on it as a projection of a symplectic Brownian motion and investigates its radial dynamics; section \ref{sec:quaternionic-area-processes} introduces the quaternionic stochastic area processes and the quaternionic winding processes and derives a relation between them, furthermore it gives an explicit expression for the characteristic function of the winding process and determines its asymptotics; and finally section \ref{sec:quaternionic-winding-sphere} applies these results to simultaneous quaternionic windings on the quaternionic spheres, in particular establishing its asymptotics.

\section{Preliminaries and Brownian motion on the quaternionic full flag}\label{sec:preliminairies}

\subsection{Quaternions}

We start by recalling the basics of the quaternionic field and setting up the notation.
The \emph{quaternions} are  defined as the non-commutative algebra
\begin{align*}\index{$\mathbb{H}$ (quaternions)}
\mathbb{H} =\{ t+x\mathbf i+y\mathbf j+z\mathbf k\mid t,x,y,z\in\mathbb{R}\} ,
\end{align*}where $\mathbf i,\mathbf j,\mathbf k$ satisfy the relations $\mathbf i^2=\mathbf j^2=\mathbf k^2=-1$, $\mathbf i \mathbf j=\mathbf k=-\mathbf  j \mathbf i$ and $\mathbf i \mathbf k=-\mathbf  j=-\mathbf k \mathbf i$.\index{quaternions} 
For a quaternion $q=t+x\mathbf i+y\mathbf j+z\mathbf k$, its real part is 
\begin{align*}
\mathrm{Re}(q):=t ,
\end{align*}
while its imaginary part is the $3$-dimensional component
\begin{align*}
\mathrm{Im}(q) :=x\mathbf i+y\mathbf j+z\mathbf k.
\end{align*}
Its quaternionic conjugate is defined by
\begin{align*}
	\overline{q} :=t-x\mathbf i-y\mathbf j-z\mathbf k,
\end{align*}
and its norm by
\begin{align*}
    |q|^2 :=q\overline{q} =\overline{q} q = t^2+x^2+y^2+z^2 .
\end{align*}
In general we have 
\[
\overline{pq} =\overline{q}\ \overline{p} \quad \mbox{and}\quad \mathrm{Re}(q)=\frac{1}{2}(q+\overline{q}).
\]
As a real vector space, $\mathbb{H}$ can be identified with $\mathbb{R}^4$ in the obvious way. For quaternions $q_1=t_1+x_1\mathbf i+y_1\mathbf j+z_1\mathbf k$ and $q_2=t_2+x_2\mathbf i+y_2\mathbf j+z_2\mathbf k$, we use the  inner product 
\[
q_1 \cdot q_2:= t_1t_2+x_1x_2+y_1y_2+z_1z_2=\frac{1}{2}(q_1 \overline{q_2}+q_2 \overline{q_1}).
\]
The set of imaginary quaternions 
\[
\mathfrak{sp}(1):= \left\{ x\mathbf i+y\mathbf j+z\mathbf k \mid x,y,z \in \mathbb{R}  \right\}
\]
forms a Lie algebra with the Lie bracket $[q_1,q_2]=q_1q_2-q_2q_1$, and is isomorphic to the Lie algebra $\mathfrak{su}(2)$. Lastly, let us also recall that the unit quaternions form the Lie group
\[
\mathrm{Sp}(1):= \left\{  q \in \mathbb{H} \mid |q|=1 \right\} ,
\]
which is isomorphic to the Lie group $\mathrm{SU}(2)$.

\subsection{Compact symplectic group}

In the rest of this paper we focus on the case $n \ge 2$. Consider the compact symplectic group
\[
  \mathrm{Sp}(n)=\mathbf{U}(n,\bbH)
  :=\{\,U\in \bbH^{n\times n}\mid U^\ast U=I_n\,\},
\]
where the quaternionic adjoint is defined by $U^\ast=\overline U^{\,\mathsf T}$, and as before,
$\overline{a+b\mathbf i+c\mathbf j+d\mathbf k}=a-b\mathbf i-c\mathbf j-d\mathbf k$ denotes the quaternionic conjugate.
The Lie algebra of $\mathrm{Sp}(n)$ is
\[
  \mathfrak{sp}(n)=\{\,X\in \bbH^{n\times n}\mid X^\ast=-X\,\}.
\]
We equip $\mathrm{Sp}(n)$ with the bi-invariant Riemannian metric induced by the
Hilbert--Schmidt inner product on $\mathfrak{sp}(n)$, given by
\begin{align}\label{eq:Hilbert-Schmidt-inner-product}
  \langle X,Y\rangle_{\mathfrak{sp}(n)} \;=\; \frac{1}{2} \Rea (\Tr(X^\ast Y)),\qquad X,Y\in\mathfrak{sp}(n).
\end{align}

Let $E_{jk}$ denote the $n\times n$ quaternionic matrix with $1$ at the $(j,k)$ entry and $0$ elsewhere.
Write
\[
\mathbf e_0=1, \quad \mathbf e_1=\mathbf i,\quad \mathbf e_2=\mathbf j,
\quad \mathbf e_3=\mathbf k,
\]
and observe that 
$\overline{\mathbf e_0}=\mathbf e_0$, $\overline{\mathbf e_\alpha}=-\mathbf e_\alpha$ for $\alpha=1,2,3$.
 A real basis of $\mathfrak{sp}(n)$ is given by:

\paragraph{Off-diagonal generators ($1\leq j<k\leq n$, $a=0,1,2,3$).}
\[
  X^{(a)}_{jk}
  := E_{jk} \, \mathbf e_a - E_{kj} \, \overline{\mathbf e_a}.
\]

\paragraph{Diagonal generators ($j=1,\dots,n$, $a=1,2,3$).}
\[
  H^{(a)}_{j} \;:=\; \sqrt{2} E_{jj} \mathbf e_a.
\]
It follows that the real dimension of $\mathfrak{sp}(n)$, and hence $\mathrm{Sp}(n)$, is  $n(2n+1)$.

\

For computations it is often convenient to regard $\mathrm{Sp}(n)$ as a submanifold of the set of quaternionic matrices parametrized by real coordinates. More precisely, elements of a $n \times n$ matrix $U=(q_{ij})_{1 \le i,j \le n}$ in $\mathbb H^{n \times n}$ can be denoted by
\[
q_{ij}=t_{ij}+x_{ij}\mathbf i+y_{ij}\mathbf j+z_{ij}\mathbf k,
\]
and for any $h=a+b\mathbf i+c\mathbf j+d \mathbf k$, one can define a real vector field $h \cdot \partial_{q_{ij}}$ on  $\mathbb H^{n \times n}\simeq \mathbb{R}^{4n^2} $ by
\[
h \cdot \partial_{q_{ij}} :=a \partial_{t_{ij}}+b \partial_{x_{ij}}+c \partial_{y_{ij}}+d \partial_{z_{ij}}. 
\]
Therefore $\mathrm{Sp}(n)$ can  be seen as a real immersed Riemannian submanifold of $\mathbb R^{4n^2}$ equipped with $1/2$ the standard Euclidean inner product structure.
With this notation, one can compute the left-invariant vector fields on $\mathrm{Sp}(n)$ corresponding to the basis elements in $\mathfrak{sp}(n)$.  Recall that if $X \in \mathfrak{sp}(n)$, the corresponding left-invariant field is given by
\[
 (X f)(U)=\frac{d}{dt}\bigg|_{t=0} f(Ue^{tX}), \,\quad U\in \mathrm{Sp}(n).
 \]
After straightforward computations, we obtain:

\begin{lemma}\label{generators sp(n)}
Let $X_{jk}^{(a)}$ be the off-diagonal generators for $1\leq j<k\leq n$, $a=0,1,2,3$ and $H_j^{(a)}$ the diagonal generators for $j=1,\dots,n$, $a=1,2,3$, as before.
Then their corresponding left-invariant vector field are given by
\[
 X^{(a)}_{jk}=\sum_{\ell =1}^n ((q_{\ell j} \mathbf{e}_a) \cdot \partial_{q_{\ell k}}-(q_{\ell k} \overline{\mathbf{e}}_a) \cdot \partial_{q_{\ell j}})
 \]
and
 \[
  H^{(a)}_{j} =\sqrt{2} \, \sum_{\ell =1}^n (q_{\ell j} \mathbf{e}_a) \cdot \partial_{q_{\ell j}}
  \]
respectively.
\end{lemma}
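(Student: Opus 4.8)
The plan is to compute directly from the definition of the left-invariant field, reducing everything to the ordinary chain rule on $\mathbb{R}^{4n^2}$ and avoiding any quaternionic calculus. The starting observation is that for any $X\in\mathfrak{sp}(n)$ the curve $t\mapsto Ue^{tX}$ has velocity $\frac{d}{dt}\big|_{t=0}Ue^{tX}=UX$ at $t=0$, so the value at $U$ of the field associated to $X$ is encoded entirely by the quaternionic matrix $UX=\big((UX)_{\ell m}\big)_{\ell,m}$. Writing each entry as $(UX)_{\ell m}=h_{\ell m}\in\mathbb{H}$ and parametrizing the curve in the real coordinates $q_{\ell m}=t_{\ell m}+x_{\ell m}\mathbf i+y_{\ell m}\mathbf j+z_{\ell m}\mathbf k$, each real coordinate of $q_{\ell m}$ moves with the rate given by the corresponding component of $h_{\ell m}$, so the ordinary (real) chain rule gives
\[
(Xf)(U)=\frac{d}{dt}\bigg|_{t=0}f(Ue^{tX})=\sum_{\ell,m}h_{\ell m}\cdot\partial_{q_{\ell m}}f,
\]
where $h\cdot\partial_{q_{\ell m}}$ is exactly the operator defined above. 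The whole computation thus reduces to reading off the entries of $UX$ for each basis element.

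The second step is the matrix bookkeeping, carried out separately for the two families. For the off-diagonal generator $X^{(a)}_{jk}=E_{jk}\mathbf e_a-E_{kj}\overline{\mathbf e_a}$ only the $(j,k)$ and $(k,j)$ entries of the matrix are nonzero, so $(UX^{(a)}_{jk})_{\ell m}=\sum_p q_{\ell p}(X^{(a)}_{jk})_{pm}$ vanishes unless $m\in\{j,k\}$; for $m=k$ one obtains $q_{\ell j}\mathbf e_a$ and for $m=j$ one obtains $-q_{\ell k}\overline{\mathbf e_a}$. Substituting these into the chain-rule formula above and using the linearity of $h\mapsto h\cdot\partial_{q_{\ell m}}$ yields the claimed expression. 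The diagonal case is identical but shorter: since $H^{(a)}_j=\sqrt2\,E_{jj}\mathbf e_a$ has its only nonzero entry at $(j,j)$, the product $UH^{(a)}_j$ has $(\ell,m)$ entry equal to $\sqrt2\,q_{\ell j}\mathbf e_a$ when $m=j$ and zero otherwise, which gives the stated formula after pulling the scalar $\sqrt2$ out of the sum.

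The only genuinely delicate point, and the one I would watch most carefully, is the non-commutativity of quaternionic multiplication: because the generator is multiplied \emph{on the right} in $UX$, the entries appear as $q_{\ell j}\mathbf e_a$ (the quaternion $q_{\ell j}$ with $\mathbf e_a$ on the right) rather than $\mathbf e_a q_{\ell j}$, and this ordering must be preserved when the entries are fed into $h\cdot\partial_{q_{\ell m}}$. One must also check that the chain rule through the identification $\mathbb H^{n\times n}\simeq\mathbb R^{4n^2}$ respects this ordering, which it does by construction, since $h\cdot\partial_{q_{\ell m}}$ is precisely the real directional derivative attached to the quaternion $h$ viewed as a velocity of $q_{\ell m}$. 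No quaternionic differential calculus enters; the argument is the real chain rule combined with the right-multiplication rule for matrix products.
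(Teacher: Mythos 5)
Your proof is correct and is precisely the ``straightforward computation'' the paper omits: identifying the value of the left-invariant field at $U$ with the entries of $UX$ and reading these off for each basis element, with the right-multiplication ordering handled exactly as the notation $h\cdot\partial_{q_{\ell m}}$ requires. Nothing to add.
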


Since  $X^{(a)}_{jk},H^{(a)}_{j}$ form a left-invariant orthonormal frame and  $\mathrm{Sp}(n)$ is a unimodular Lie  group, it follows that the Laplace-Beltrami operator on $\mathrm{Sp}(n)$ is given by

\begin{equation}\label{eq:Delta-left-sum}
  \Delta_{\mathrm{Sp}(n)}  \;=\;\sum_{1\le j<k\le n}\ \sum_{a=0}^3 \big(X_{jk}^{(a)}\big)^{\!2}
               \;+\;\sum_{j=1}^n\ \sum_{a=1}^3 \big(H_j^{(a)}\big)^{\!2}.
\end{equation}

\subsection{Quaternionic full flag manifold}\label{section full flag}

The \emph{quaternionic full flag manifold} is defined here as the Riemannian homogeneous space
\[
F_{1,2\dots ,n-1}(\mathbb{H}^n):=\mathrm{Sp}(n)/\mathrm{Sp}(1)^n,
\]
where we see $\mathrm{Sp}(1)^n$ as the subgroup of all diagonal unitary quaternionic matrices and the corresponding action is given by right multiplication.

More precisely, $\mathrm{Sp}(1)^n$ is the closed subgroup of $\mathrm{Sp}(n)$ with Lie algebra
\[
\mathfrak{sp}(1)^n=\mathrm{span} \left\{ H_j^{(a)}\bigm| a=1,2,3, \  j=1,\dots,n \right\}
\]
and the quotient space is the coset space
\[
\mathrm{Sp}(n)/\mathrm{Sp}(1)^n=\left\{ U \mathrm{Sp}(1)^n\mid U \in \mathrm{Sp}(n)  \right\}.
\]
From   \cite[Theorem 2.4.1]{MR2500106}, there exists a unique Riemannian metric on $\mathrm{Sp}(n)/\mathrm{Sp}(1)^n$ such that the projection map 
\[
\pi: \mathrm{Sp}(n) \longrightarrow \mathrm{Sp}(n)/\mathrm{Sp}(1)^n
\]is a Riemannian submersion with totally geodesic fibers.  The Laplace-Beltrami operator associated with this metric will be denoted by $\Delta_{F_{1,2\dots ,n-1}(\mathbb{H}^n)}$.

The vertical bundle $\mathcal{V}$ of this submersion  is  the sub-bundle of $T\mathrm{Sp}(n)$  tangent to the fibers. It is given by 
\[
\mathcal{V}=\{\mathcal{V}_U\mid U\in \mathrm{Sp}(n) \},\quad  
\mathcal{V}_U=\mathrm{span} \left\{ H_j^{(a)}(U) \mid a=1,2,3, \  j=1,\dots,n \right\}.
\]
The horizontal bundle, which is the orthogonal bundle to $\mathcal{V}$, is given by
\begin{align}\label{horizontal bundle}
\mathcal{H}=\{\mathcal{H}_U\mid U\in \mathrm{Sp}(n) \},\quad \mathcal{H}_U=\mathrm{span} \left\{ X_{jk}^{(a)}(U) \mid a=0,1,2,3, \ 1\leq j<k\leq n\right\}.
\end{align}
Note that  $\mathrm{Sp}(n)$ acts by isometries on $\mathrm{Sp}(n)/\mathrm{Sp}(1)^n$, so $\mathrm{Sp}(n)/\mathrm{Sp}(1)^n$ is indeed a Riemannian homogeneous space. Moreover, we mention that the resulting fibration
\[
\mathrm{Sp}(1)^n \longrightarrow \mathrm{Sp}(n) \longrightarrow \mathrm{Sp}(n)/\mathrm{Sp}(1)^n
\]
is a special case of a B\'erard-Bergery fibration, see \cite[Theorem 9.80]{Besse}.

The name quaternionic full flag manifold comes from the fact that it can also be defined as the space of full quaternionic flags in the right skew-vector space $\mathbb{H}^n$.
By a \emph{full quaternionic flag} we mean a sequence of quaternionic vector subspaces
\begin{align*}
    \{ 0\}\subsetneq W_1\subsetneq\dots\subsetneq W_{n-1}\subsetneq\mathbb{H}^n .
\end{align*}
It can be easily verified that the full quaternionic flag is diffeomorphic to the quotient space $\mathrm{Sp}(n)/\mathrm{Sp}(1)^n$. Consider the surjective map
\begin{align*}
    \mathrm{Sp}(n)\longrightarrow &\hspace{1.5mm} F_{1,2\dots ,n-1}(\mathbb{H}^n)\\
    M\quad\longmapsto & (W_1,\dots ,W_{n-1}) ,
\end{align*}
where $W_j=\mathrm{span}_{\mathbb{H}}(Me_1,\dots ,Me_j)$ and $e_1,\dots ,e_n$ is the canonical basis of $\mathbb{H}^n$.
This map is invariant under the action of $\mathrm{Sp}(1)^n$ on $\mathrm{Sp}(n)$ and thus descends to a bijection $\mathrm{Sp}(n)/\mathrm{Sp}(1)^n\rightarrow F_{1,2,\dots ,n-1}(\mathbb{H}^n)$. 

We now parametrize (a dense subset of) $F_{1,2,\dots ,n-1}(\mathbb{H}^n)$ using  \emph{quaternionic local affine} variables as follows. Let 
\begin{align}\label{eq:quaternionic-domain}
\mathcal{D} :=\left\{ \left(\begin{matrix}
        q_{11} & \dots & q_{1n} \\
        \vdots & \ddots &\vdots \\
        q_{n1} & \dots & q_{nn}
    \end{matrix} \right) \in \mathrm{Sp}(n) \biggm| q_{n1} \neq 0, \dots, q_{nn} \neq 0   \right\}
\end{align}
and consider the smooth map $p: \mathcal{D} \to \mathbb{H}^{n-1} \times \dots \times  \mathbb{H}^{n-1}$ defined by
\begin{align}\label{submersion p}
p\left(\begin{matrix}
        q_{11} & \dots & q_{1n} \\
        \vdots & \ddots &\vdots \\
        q_{n1} & \dots & q_{nn}
    \end{matrix} \right) 
    =\left( \left(\begin{matrix}
        q_{11}q^{-1}_{n1} \\
        \vdots \\
        q_{(n-1)1}q^{-1}_{n1}
        \end{matrix}
        \right), \dots, \left(\begin{matrix}
        q_{1n}q^{-1}_{nn} \\
        \vdots \\
        q_{(n-1)n}q^{-1}_{nn}
        \end{matrix}
        \right) \right).
\end{align}
Clearly for every $M_1,M_2 \in \mathcal{D}$, $p(M_1)=p(M_2)$ is equivalent to $M_2=M_1g$ for some $g \in \mathrm{Sp}(1)^n$. Since $p$ is a submersion from $\mathcal{D}$ onto its image $\mathcal{O}:=p(\mathcal{D})$, one deduces that there exists a diffeomorphism $\Phi$ between an open dense subset of $\mathrm{Sp}(n)/ \mathrm{Sp}(1)^n$ and $p(\mathcal{D})$ such that $\Phi \circ \pi =p$.  This gives rise to a local set of coordinates  on $F_{1,2,\dots ,n-1}(\mathbb H^n)$. Those coordinates are compatible with the metric in the sense that $p$ is a Riemannian submersion, which implies that $\Phi$ is an isometry. Those coordinates are only implicit, so we will rather work with the parametrization
\begin{align}\label{local affine parametrization}
w_i:=\left(\begin{matrix}
        q_{1i}q^{-1}_{ni} \\
        \vdots \\
        q_{(n-1)i}q^{-1}_{ni}
        \end{matrix}
        \right), \quad (q_{ij})_{1\leq i,j\leq n} \in \mathrm{Sp}(n).
\end{align}

Notice that the domain $\mathcal{O}$ of those variables can explicitly be described as
\begin{align}\label{eq-O}
    \mathcal{O}=\left\{ w=(w_1,\dots,w_{n-1},w_n) \in \mathbb{H}^{n-1} \times \dots \times \mathbb{H}^{n-1}  \mid w_i^* w_j =-1,\,  1 \le i <j \le n \right\},
\end{align}
which yields a nice parametrization of a dense open subset of $F_{1,2,\dots ,n-1}(\mathbb H^n)$ by the algebraic manifold $\mathcal{O}$. This parametrization will be extensively used in the sequel. 

\subsection{The quaternionic full flag Brownian motion and its radial part}

\begin{definition}
The symplectic  Brownian motion $U(t)=(q_{ij}(t))_{1 \le i,j \le n}$ is the diffusion process on $\mathrm{Sp}(n)$ with generator $\frac{1}{2} \Delta_{\mathrm{Sp}(n)}$ where $\Delta_{\mathrm{Sp}(n)}$ is the Laplacian on $\mathrm{Sp}(n)$ given by \eqref{eq:Delta-left-sum}. 
\end{definition}

\begin{remark}
  Since  the Riemannian metric on $\mathrm{Sp}(n)$ is bi-invariant, a symplectic Brownian motion is both left and right invariant meaning that for every $g \in \mathrm{Sp}(n)$ we have the equality in distribution 
  \[
  (gU(t)g^{-1})_{t \ge 0}\stackrel{\mathclap{\mathrm{d}}}{=} (U(t))_{t \ge 0}.
  \]
\end{remark}

Although we will not make use of it in this paper, we point out that since $\mathrm{Sp}(n)$ is a Lie group, $(U(t))_{t\geq 0}$ can be represented as a solution of the stochastic differential equation in Stratonovitch form
\[
dU(t) =U(t) \circ dA(t),
\]
where $(A(t))_{t\geq 0}$ is a Brownian motion on the Lie algebra $\mathfrak{sp}(n)$, see \cite[Chapter 2]{Baudoin2024-is}.







\

We have the following lemma.

\begin{lemma}\label{polarity}
Let $\mathcal{D}$ be given as in \eqref{eq:quaternionic-domain}.
If $U(0) \in \mathcal{D}$, then
\[
\mathbb{P}( \forall t \ge 0, U(t) \in \mathcal{D})=1.
\]
\end{lemma}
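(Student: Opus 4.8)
The plan is to exploit the fact that membership in $\mathcal{D}$ depends only on the last row of $U(t)$, together with the projection feature highlighted in the introduction: the last row $(q_{n1}(t),\dots,q_{nn}(t))$ of the symplectic Brownian motion is a unit vector in $\mathbb{H}^n\simeq\mathbb{R}^{4n}$ performing a Brownian motion on the quaternionic sphere $\mathbb{S}^{4n-1}$. First I would record that $U(t)\in\mathcal{D}$ if and only if none of the entries $q_{n1}(t),\dots,q_{nn}(t)$ vanishes, so the complement of $\mathcal{D}$ intersects the last row exactly in $\bigcup_{i=1}^{n}\{q_{ni}=0\}$. Thus the statement reduces to showing that a spherical Brownian motion started off these sets almost surely never reaches them.

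Next I would identify the geometry of each bad set. Setting the $i$-th quaternionic coordinate $q_{ni}$ to zero imposes four real equations, so $\{q_{ni}=0\}\cap\mathbb{S}^{4n-1}$ is a totally geodesic subsphere $\mathbb{S}^{4n-5}$, of codimension $4$ in $\mathbb{S}^{4n-1}$. The decisive point is that $4\ge 2$: closed submanifolds of codimension at least two are polar for Brownian motion. To make this self-contained I would run the standard Bessel comparison. Let $\rho_i(t)$ denote the Riemannian distance on $\mathbb{S}^{4n-1}$ from the running point to the subsphere $\{q_{ni}=0\}$, which vanishes exactly on the bad set. Near this subsphere the radial part of the spherical Brownian motion satisfies an Itō equation whose drift is $\frac{m-1}{2\rho_i}\,dt$ plus bounded terms, with $m=4$ the codimension, so up to a time change and lower-order corrections $\rho_i(t)$ dominates a Bessel process of dimension $4$. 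Since a Bessel process of dimension $\ge 2$ never hits the origin, $\rho_i(t)>0$ for all $t$ almost surely, and a union bound over the finitely many indices $i=1,\dots,n$ gives $\mathbb{P}(\forall t\ge 0,\ U(t)\in\mathcal{D})=1$.

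The main obstacle is making the Bessel comparison rigorous: one must verify the stated near-boundary drift and control the curvature and lower-order terms uniformly on a neighbourhood of each subsphere, so that non-attainability of the comparison Bessel process genuinely transfers to $\rho_i$. This is clean in the present setting, because the squared moduli $|q_{n1}(t)|^2,\dots,|q_{nn}(t)|^2$ form a Jacobi diffusion on the simplex of index $(3/2,\dots,3/2)$, as anticipated by the radial dynamics of the construction. Near the face $\{|q_{ni}|^2=0\}$ the coordinate $|q_{ni}(t)|^2$ behaves like a squared Bessel process of dimension $4$, matching the codimension, and since $4\ge 2$ this face is non-attainable. One may therefore read off the conclusion directly from the one-dimensional boundary behaviour of the Jacobi diffusion, avoiding any quaternionic Itō computation, consistent with the geometric spirit of the paper.
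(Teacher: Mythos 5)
Your proof follows essentially the same route as the paper's: reduce to the last-row process, which is a Brownian motion on $\mathbb{S}^{4n-1}$, and show that each coordinate zero set $\{q_{ni}=0\}$ is polar, so their finite union is polar. You are in fact slightly more careful on the one substantive point: the paper justifies polarity only by observing that the dimension $4n-5$ is strictly smaller than $4n-1$, whereas you correctly invoke the codimension-$\geq 2$ criterion (codimension-one submanifolds are \emph{not} polar) and support it with the Bessel comparison and the index-$(3/2,\dots,3/2)$ Jacobi boundary classification, which matches the paper's own radial analysis in Theorem \ref{thm:generator-quaternionic-radial-process}.
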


\begin{proof}
We first note that the process $\xi(t)=(q_{n1}(t),\dots,q_{nn}(t))$ is a Brownian motion on the $4n-1$ dimensional sphere $\mathbb{S}^{4n-1}$. This is because the map $\mathrm{Sp}(n)\to \mathbb{S}^{4n-1}$ that associates a symplectic matrix to its last row is a Riemannian submersion with totally geodesic fibers.
Define
\begin{align*}
    \tilde{\mathcal{D}} :=\{ (q_{1},\dots ,q_n)\in\mathbb{S}^{4n-1}\mid q_1\neq 0,\dots ,q_n\neq 0\} .
\end{align*}
It is then enough to prove that the set $\mathbb{S}^{4n-1} \setminus \tilde{\mathcal D}$ is polar for the Brownian motion $\xi$, but $\mathbb{S}^{4n-1} \setminus \tilde{\mathcal D}$ can be written as
\[
\mathbb{S}^{4n-1} \setminus \tilde{\mathcal D}=\mathcal{O}_1 \cup \cdots \cup \mathcal{O}_n ,
\]
where $\mathcal{O}_i=\left\{ q=(q_1,\dots,q_n) \in \mathbb{S}^{4n-1} \mid q_i=0 \right\}$. Now, each of the $\mathcal{O}_i$'s is a submanifold  of dimension $4n-5$, which is strictly smaller than $4n-1$, the dimension of $\mathbb{S}^{4n-1}$. Therefore $\mathcal{O}_i$ is a polar set for $\xi$, which immediately implies that $\mathbb{S}^{4n-1} \setminus \tilde{\mathcal D}$ is a polar set for $\xi$.
\end{proof}

\begin{definition}
The Brownian motion on the quaternionic full flag manifold is the diffusion on $F_{1,2\dots ,n-1}(\mathbb{H}^n)$ with generator $\frac{1}{2} \Delta_{F_{1,2\dots ,n-1}(\mathbb{H}^n)}$. 
\end{definition}

Since the projection map $\mathrm{Sp}(n) \to \mathrm{Sp}(n)/\mathrm{Sp}(1)^n$ is a Riemannian submersion with totally geodesic fibers, it follows that the Brownian motion on $\mathrm{Sp}(n)$ projects down to the Brownian motion on $ \mathrm{Sp}(n)/\mathrm{Sp}(1)^n$. Therefore, from the discussion in Section \ref{section full flag} and Lemma \ref{polarity}, we deduce that if $U(t)=(q_{ij}(t))_{1\le i,j \le n}$ is a Brownian motion on $\mathrm{Sp}(n)$ such that $U(0)\in\mathcal{D}$, then 

\begin{align}\label{BM on flag}
w_i(t)=\left(\begin{matrix}
        q_{1i}(t)q^{-1}_{ni}(t) \\
        \vdots \\
        q_{(n-1)i}(t)q^{-1}_{ni}(t)
        \end{matrix}
        \right), \quad i=1,\dots,n,
\end{align}
parametrizes a Brownian motion on the quaternionic full flag. The \emph{squared radial process}\footnote{This terminology is inspired by Corollary \ref{cor:BM-symplectic-group-in-terms-of-w}.} is defined by
\begin{align}\label{radial variables}
\lambda_j(t)=\frac{1}{1+r_j(t)^2},\quad 1\leq j\leq n,
\end{align}
where
\begin{align*}
r_j(t):=\sqrt{\sum_{i=1}^{n-1} |w_{ij}(t)|^2}.
\end{align*}

\begin{theorem}\label{thm:generator-quaternionic-radial-process}
The process $(\lambda (t))_{t\geq 0}$ is a diffusion with generator $\frac{1}{2}\mathcal{G}$ where
\begin{align}\label{generator lambda}
    \mathcal{G} =4\sum_{j=1}^n\lambda_j(1-\lambda_j )\frac{\partial^2}{\partial\lambda_j^2} 
    -4\sum_{1\leq j\neq\ell\leq n}\lambda_j\lambda_{\ell}\frac{\partial^2}{\partial\lambda_j\partial\lambda_{\ell}}
    +4\sum_{j=1}^n(2-2n\lambda_j )\frac{\partial}{\partial\lambda_j } .
\end{align}
In particular, $(\lambda (t))_{t\geq 0}$ is a Jacobi process on the simplex of index $(3/2,\dots ,3/2)$, see Section \ref{sec: characteristic area} for the definition Jacobi processes on simplices.
\end{theorem}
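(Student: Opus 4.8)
The key observation is that the squared radial variables are simply the squared quaternionic norms of the entries of the \emph{last row} of $U(t)$, which reduces the whole statement to a computation for Brownian motion on the quaternionic sphere. Since $w_{ij}=q_{ij}q_{nj}^{-1}$ and hence $|w_{ij}|^2=|q_{ij}|^2/|q_{nj}|^2$, and since the columns of $U\in\mathrm{Sp}(n)$ are orthonormal so that $\sum_{i=1}^n|q_{ij}|^2=1$, I would first rewrite \eqref{radial variables} as
\[
r_j^2=\frac{\sum_{i=1}^{n-1}|q_{ij}|^2}{|q_{nj}|^2}=\frac{1-|q_{nj}|^2}{|q_{nj}|^2},\qquad \lambda_j=\frac{1}{1+r_j^2}=|q_{nj}|^2 .
\]
By Lemma \ref{polarity} and its proof, the last row $\xi(t)=(q_{n1}(t),\dots,q_{nn}(t))$ is a Brownian motion on $\mathbb{S}^{4n-1}\subset\bbH^n\simeq\R^{4n}$, i.e.\ a diffusion with generator $\tfrac12\Delta_{\mathbb{S}^{4n-1}}$; moreover $\sum_j\lambda_j=\sum_j|q_{nj}|^2=1$, so $\lambda=(\lambda_1,\dots,\lambda_n)$ indeed lives on the simplex. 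Thus $\lambda$ is the image of the spherical Brownian motion under the map $\xi\mapsto(|\xi_1|^2,\dots,|\xi_n|^2)$, which groups the $4n$ real coordinates of $\xi$ into $n$ consecutive blocks $B_1,\dots,B_n$ of four, so that $\lambda_j=\sum_{\alpha\in B_j}u_\alpha^2$.

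Next I would compute the generator of $\lambda$ via the chain rule for the diffusion operator $\tfrac12\Delta_{\mathbb{S}^{4n-1}}$,
\[
\tfrac12\Delta_{\mathbb{S}^{4n-1}}\big(g\circ\lambda\big)=\tfrac12\sum_{j}(\Delta_{\mathbb{S}^{4n-1}}\lambda_j)\,\partial_j g+\tfrac12\sum_{j,\ell}\langle\nabla\lambda_j,\nabla\lambda_\ell\rangle\,\partial_j\partial_\ell g ,
\]
so that everything reduces to the two ingredients $\Delta_{\mathbb{S}^{4n-1}}\lambda_j$ and $\langle\nabla\lambda_j,\nabla\lambda_\ell\rangle$. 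Each $\lambda_j$ extends to the function $\sum_{\alpha\in B_j}u_\alpha^2$ on $\R^{4n}$, homogeneous of degree $2$, with Euclidean Laplacian $\Delta_{\R^{4n}}\lambda_j=8$ and Euclidean gradient $2\,\xi|_{B_j}$. Decomposing $\Delta_{\R^{4n}}$ in polar coordinates and projecting the gradient onto the tangent space of the sphere then yields, on $\mathbb{S}^{4n-1}$,
\[
\Delta_{\mathbb{S}^{4n-1}}\lambda_j=8(1-n\lambda_j),\qquad \langle\nabla\lambda_j,\nabla\lambda_\ell\rangle=4(\delta_{j\ell}\lambda_j-\lambda_j\lambda_\ell).
\]
Substituting these into the chain rule reproduces exactly $\tfrac12\mathcal G$ with $\mathcal G$ as in \eqref{generator lambda}. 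Comparing the resulting drift $4(2-2n\lambda_j)$ and the diffusion coefficients with the definition of the Jacobi process on the simplex recalled in Section \ref{sec: characteristic area} then identifies the index as $(3/2,\dots,3/2)$, where $3/2=(4-1)/2$ reflects the quaternionic dimension $1$, i.e.\ real dimension $4$, of each block.

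The step requiring the most care is the precise normalization responsible for the constant $4$, since the round metric is not quite the one at play. Indeed, the last-row map from $\big(\mathrm{Sp}(n),\tfrac12\langle\cdot,\cdot\rangle_{\mathrm{HS}}\big)$ is a Riemannian submersion onto $\mathbb{S}^{4n-1}$ endowed not with the round metric but with a Berger metric whose Hopf fibers are shrunk: the three diagonal generators $H_n^{(a)}$ map to tangent vectors of round squared length $2$, whereas the off-diagonal horizontal generators map isometrically. However, each $\lambda_j=|\xi_j|^2$ is invariant under the $\mathrm{Sp}(1)$-action defining the Hopf fibration $\mathbb{S}^{4n-1}\to\mathbb{HP}^{n-1}$, so it descends to $\mathbb{HP}^{n-1}$; since the round and Berger spheres are Riemannian submersions over $\mathbb{HP}^{n-1}$ with the \emph{same} base metric and totally geodesic fibers, both $\nabla\lambda_j$ and $\Delta_{\mathbb{S}^{4n-1}}\lambda_j$ coincide with their round-sphere values. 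This Hopf-invariance is what legitimizes the clean Euclidean computation above and fixes the constants, and it is the point I would verify most carefully.
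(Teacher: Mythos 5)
Your proposal is correct and follows essentially the same route as the paper: identify $\lambda_j=|q_{nj}|^2$ as a function of the last row of the symplectic Brownian motion, use that this last row is a spherical Brownian motion, and push the spherical Laplacian through the change of variables. The difference in the core computation is only in packaging: the paper works with the projected coordinate fields $V_i=\partial_{x_i}-x_i\sum_\ell x_\ell\partial_{x_\ell}$ and the identity $V_{4j-a}=2x_{4j-a}\bigl(\partial_{\lambda_j}-\sum_\ell\lambda_\ell\partial_{\lambda_\ell}\bigr)$ on functions of $\lambda$, whereas you compute $\Delta_{\mathbb{S}^{4n-1}}\lambda_j$ and $\langle\nabla\lambda_j,\nabla\lambda_\ell\rangle$ via the Euclidean Laplacian in polar coordinates; both give the same coefficients and the same index $(3/2,\dots,3/2)$. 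The one substantive point where you go beyond the paper is the normalization issue: you are right that with the inner product $\frac12\Rea\bigl(\Tr(X^\ast Y)\bigr)$ the unit generators $H_n^{(a)}$ are sent to tangent vectors of round squared length $2$ while the $X_{jn}^{(a)}$ are sent isometrically, so the last-row map is a Riemannian submersion onto a Berger-type sphere rather than the round one — a point the paper passes over when it invokes the round Laplacian $\sum V_i^2$. Your resolution, that the $\lambda_j$ (hence all functions of $\lambda$) are invariant under the rescaled $\mathrm{Sp}(1)$-directions so that the round and Berger Laplacians agree on them (equivalently, one checks directly that $H_j^{(a)}\lambda_m=0$), is exactly what legitimizes the paper's computation and is worth making explicit.
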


\begin{proof}
From \eqref{radial variables} we know that
\begin{align*}
\lambda_j(t)=\frac{1}{1+r_j(t)^2}=|q_{nj}(t)|^2 ,
\end{align*}
where $(q_{nj}(t))_{1\le j\le n}$ is the last row of a Brownian motion $(q_{ij}(t))_{1 \le i,j \le n}$ on $\mathrm{Sp}(n)$. Moreover, since the projection of $\mathrm{Sp}(n)$ onto its last row $\mathrm{Sp}(n)\rightarrow\mathbb{S}^{4n-1}$ is a totally geodesic submersion, the process $(q_{n1}(t),\dots, q_{nn}(t))_{t\geq 0}$ is  a Brownian motion on the sphere $\mathbb{S}^{4n-1}$.   

Let  $(\xi(t))_{t\geq 0}$ on the sphere $\mathbb{S}^{4n-1}$ with real components $(\xi_i(t))_{t\geq 0}$, $1 \le i \le 4n$. We then have in distribution
\[
\lambda_j(t)=\sum_{a=0}^3 \xi_{4j-a}(t)^2, \quad t\ge0,\ j=1,\dots, n.
\]

We now show that the generator of $(\lambda (t))_{t\geq 0}$ is given by one half times \eqref{generator lambda}. From \cite[Example 2.4.3]{Baudoin2024-is}, the Laplacian on $\mathbb{S}^{4n-1}$ can be written as
\[
\sum_{j=1}^{n}\sum_{a=0}^3  V_{4j-a}^2
\]
where
\[
V_i=\frac{\partial}{\partial x_i} - x_i \left(\sum_{\ell =1}^{4n} x_{\ell} \frac{\partial}{\partial x_{\ell}} \ \right)
\]
is the orthogonal projection of $\frac{\partial}{\partial x_i}$ onto $T_x\mathbb{S}^{4n-1}$.
Now,
\[
\lambda_j =x_{4j-3}^2 +x_{4j-2}^2+x_{4j-1}^2+x_{4j}^2
\]
so that on functions depending only on $\lambda_1,\dots,\lambda_n$ we have
\[
\frac{\partial}{\partial x_{4j-a} }=2 x_{4j-a} \frac{\partial}{\partial \lambda_j}.
\]
This gives
\[
V_{4j-a}=2x_{4j-a} \left( \frac{\partial}{\partial \lambda_j} -\sum_{\ell =1}^{n} \lambda_{\ell} \frac{\partial}{\partial \lambda_{\ell}} \right)
\]
and therefore $(\lambda(t))_{t\geq 0}$ is a diffusion with generator
 \begin{align*}
 \frac{1}{2}\mathcal{G}=2 \sum_{j=1}^n \lambda_j \left( \frac{\partial}{\partial \lambda_j} -\sum_{\ell =1}^n \lambda_{\ell} \frac{\partial}{\partial \lambda_{\ell}} \right)^2 + \sum_{j=1}^n (4-\lambda_j) \left( \frac{\partial}{\partial \lambda_j} -\sum_{\ell =1}^n \lambda_{\ell} \frac{\partial}{\partial \lambda_{\ell}} \right) .
 \end{align*}
The result follows after a straightforward computation.
\end{proof}

%
%
%
\section{Quaternionic stochastic area processes and skew-product decompositions}\label{sec:quaternionic-area-processes}
In this section, we introduce the quaternionic stochastic area functionals associated with a Brownian motion on the quaternionic full flag manifold.
We then derive explicit expressions for their characteristic functions and prove that these functionals converge in distribution to a multivariate normal distribution.

\subsection{The quaternionic full flag manifold as a spheroid bundle}
The stochastic area processes will be constructed as stochastic line integrals of a natural stochastic area form on the quaternionic full flag manifold. This form is closely related to the connection form of the fibration 
\begin{align}\label{eq:quaternionic-full-flag-fibration}
    \mathrm{Sp}(1)^n\rightarrow\mathrm{Sp}(n)\rightarrow F_{1,2,\dots ,n-1}(\mathbb{H}^n),
\end{align}
which identifies the compact symplectic group as a spheroid\footnote{A \emph{spheroid} is a group of the form $\mathrm{Sp}(1)^n$.} bundle over the quaternionic full flag manifold. This construction is consistent with the general framework developed in \cite[section 3.5.1]{Baudoin2024-is}.
We introduce the notations
\[
    dq_j := dt_j + dx_j \mathbf{i} + dy_j \mathbf{j} + dz_j \mathbf{k}
    \qquad\text{and}\qquad
    d\overline{q}_j := dt_j - dx_j \mathbf{i} - dy_j \mathbf{j} - dz_j \mathbf{k}.
\]
These notations interact naturally with quaternionic multiplication. In particular, one easily verifies 
\begin{align}\label{leibniz rule quaternionic differential}
    d(q_j q_k) = (dq_j)\, q_k + q_j\, (dq_k),
\end{align}
which will be used throughout the section.

We first determine the connection form on the fibration \eqref{eq:quaternionic-full-flag-fibration}.
\begin{proposition}
    Let $\eta =(\eta_1 ,\dots ,\eta_n)$ be the $\mathfrak{sp}(1)^n$-valued one-form given by
    \begin{align}\label{eq-eta}
        \eta_j :=\frac{1}{2}\sum_{k=1}^n(\overline{q}_{kj}dq_{kj} -d\overline{q}_{kj}q_{kj}) .
    \end{align}
    Then, $\eta$ is the connection form of the fibration \eqref{eq:quaternionic-full-flag-fibration}.
\end{proposition}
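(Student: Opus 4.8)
The plan is to verify that $\eta$ satisfies the two defining properties of the principal connection form for the right $\mathrm{Sp}(1)^n$-action on $\mathrm{Sp}(n)$: it reproduces the fundamental (vertical) vector fields, and its kernel coincides with the horizontal bundle $\mathcal{H}$ of \eqref{horizontal bundle}. Since the vertical generators $H_j^{(a)}$ and the horizontal generators $X_{jk}^{(a)}$ of Lemma~\ref{generators sp(n)} together form a global frame and $\eta$ is $\mathbb{R}$-linear in its argument, it is enough to evaluate $\eta$ on these frame fields. The only inputs I need are the elementary contraction rule $dq_{kj}(h\cdot\partial_{q_{kj}})=h$ and $d\overline{q}_{kj}(h\cdot\partial_{q_{kj}})=\overline{h}$ for $h\in\mathbb{H}$ (with $dq_{kj},d\overline{q}_{kj}$ annihilating $\partial_{q_{k'j'}}$ whenever $(k',j')\neq(k,j)$), the sign convention $\overline{\mathbf{e}}_a=-\mathbf{e}_a$, and the column-orthonormality relations $\sum_{k}\overline{q}_{kj}q_{k\ell}=\delta_{j\ell}$ encoded in $U^\ast U=I_n$.

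First I would record that $\eta$ is genuinely $\mathfrak{sp}(1)^n$-valued: conjugating \eqref{eq-eta} and using $\overline{dq_{kj}}=d\overline{q}_{kj}$ gives $\overline{\eta_j}=-\eta_j$, so each $\eta_j$ takes values in the imaginary quaternions $\mathfrak{sp}(1)$. Next I would compute $\eta_j(H_m^{(b)})$. Because $\eta_j$ involves only column-$j$ differentials while $H_m^{(b)}=\sqrt{2}\sum_\ell (q_{\ell m}\mathbf{e}_b)\cdot\partial_{q_{\ell m}}$ involves only column-$m$ derivatives, the pairing vanishes unless $m=j$; and for $m=j$ the contraction rule yields
\[
\eta_j(H_j^{(b)})=\frac{\sqrt{2}}{2}\sum_{k=1}^n\bigl(\overline{q}_{kj}q_{kj}\mathbf{e}_b+\mathbf{e}_b\overline{q}_{kj}q_{kj}\bigr)=\sqrt{2}\,\mathbf{e}_b\sum_{k=1}^n|q_{kj}|^2=\sqrt{2}\,\mathbf{e}_b,
\]
where the final step uses $\sum_k|q_{kj}|^2=(U^\ast U)_{jj}=1$. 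This is exactly the image of $H_j^{(b)}=\sqrt{2}E_{jj}\mathbf{e}_b$ under the canonical identification $\mathcal{V}_U\cong\mathfrak{sp}(1)^n$, so $\eta$ restricts to the identity on the vertical bundle and hence reproduces the fundamental vector fields.

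Then I would show $\eta$ annihilates $\mathcal{H}$ by evaluating $\eta_j(X_{mp}^{(a)})$ for $m<p$. Once more only the columns occurring in $\eta_j$ matter, so the pairing is zero unless $j\in\{m,p\}$. In each of the two surviving cases the contraction rule produces, using that conjugation is an involution and $\overline{\mathbf{e}}_a=-\mathbf{e}_a$, a quaternionic-linear combination of the two sums $\sum_k\overline{q}_{km}q_{kp}$ and $\sum_k\overline{q}_{kp}q_{km}$; these are the off-diagonal entries $(U^\ast U)_{mp}$ and $(U^\ast U)_{pm}$ of the identity matrix, hence both vanish since $m\neq p$. Thus $\eta$ kills every horizontal generator, and since $\mathcal{V}$ and $\mathcal{H}$ are complementary this gives $\ker\eta=\mathcal{H}$, identifying $\eta$ as the connection form of the Riemannian submersion \eqref{eq:quaternionic-full-flag-fibration}.

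Finally, equivariance can be checked in one line: under the right action of $g=\mathrm{diag}(g_1,\dots,g_n)\in\mathrm{Sp}(1)^n$ the $j$-th column transforms as $q_{kj}\mapsto q_{kj}g_j$, so that $R_g^\ast\eta_j=\overline{g}_j\,\eta_j\,g_j=\mathrm{Ad}_{g_j^{-1}}\eta_j$, which is precisely $R_g^\ast\eta=\mathrm{Ad}_{g^{-1}}\eta$. I do not expect a genuine conceptual obstacle here, since the computation is direct and every simplification is driven by the single structural relation $U^\ast U=I_n$. The main practical hazard is bookkeeping the quaternionic non-commutativity and the conjugation signs $\overline{\mathbf{e}}_a=-\mathbf{e}_a$ correctly, in particular keeping track of whether quaternionic factors multiply on the left or on the right.
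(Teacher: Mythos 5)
Your proof is correct, but it follows a genuinely different route from the paper's. The paper disposes of the proposition in two lines: it invokes the isometric embedding $\iota\colon F_{1,2,\dots,n-1}(\mathbb{H}^n)\hookrightarrow(\mathbb{H}P^{n-1})^n$, recalls from \cite[Theorem 5.1.8]{Baudoin2024-is} that $\tfrac12\sum_k(\overline{q}_k\,dq_k-d\overline{q}_k\,q_k)$ is the connection form of the quaternionic Hopf fibration $\mathrm{Sp}(1)\to\mathbb{S}^{4n-1}\to\mathbb{H}P^{n-1}$, and observes that the fibration \eqref{eq:quaternionic-full-flag-fibration} is the pullback of the tensored Hopf fibration under $\iota$, so the connection form pulls back accordingly. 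You instead verify the three defining properties of a principal connection by hand on the frame of Lemma~\ref{generators sp(n)}: $\eta_j(H_j^{(b)})=\sqrt{2}\,\mathbf{e}_b$ (reproduction of fundamental vector fields, using $\sum_k|q_{kj}|^2=1$), $\eta_j(X_{mp}^{(a)})=0$ because the contractions reduce to the off-diagonal entries $(U^\ast U)_{mp}$ and $(U^\ast U)_{pm}$ (so $\ker\eta=\mathcal{H}$ as defined in \eqref{horizontal bundle}), and $R_g^\ast\eta_j=\mathrm{Ad}_{g_j^{-1}}\eta_j$; I checked the quaternionic bookkeeping, including the $a=0$ case of the horizontal generators and the sign $\overline{\mathbf{e}}_a=-\mathbf{e}_a$, and it all goes through. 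What your approach buys is a self-contained, explicit verification that does not depend on the external Hopf-fibration result or on justifying the pullback structure of the fibration; what the paper's approach buys is brevity and a conceptual explanation of \emph{why} the formula is the column-wise sum of Hopf connection forms. Either is acceptable; yours is the more elementary and arguably the more convincing to a reader who does not have the monograph at hand.
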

\begin{proof}
    We will use the isometrical embedding $\iota :F_{1,2,\dots ,n-1}(\mathbb{H}^n)\hookrightarrow (\mathbb{H}P^{n-1})^n$ and the fact that
    \begin{align*}
        \frac{1}{2}\sum_{k=1}^n(\overline{q}_{k}dq_{k} -d\overline{q}_{k}q_{k})
    \end{align*}
    is the connection form of the quaternionic Hopf fibration
    \begin{align}\label{eq:quaternionic-Hopf-fibtration}
        \mathrm{Sp}(1)\rightarrow \mathbb{S}^{4n-1}\rightarrow\mathbb{H}P^{n-1},
    \end{align}
    see \cite[Theorem 5.1.8.]{Baudoin2024-is}.
    The proposition now follows directly from the fact that the fibration \eqref{eq:quaternionic-full-flag-fibration} is the pullback of the tensored fibration \eqref{eq:quaternionic-Hopf-fibtration} under the inclusion $\iota$.
\end{proof}

Next we introduce the quaternionic area form on the full flag manifold. 
\begin{definition}\label{def-a}
Let $\{{w}_{ij}\}_{1\le i\le n-1, 1\le j\le n}$ be the quaternionic local affine coordinate on the full flag manifold $F_{1,2,\dots ,n-1}(\mathbb{H}^n)$ as given in \eqref{local affine parametrization}. The \emph{quaternionic area form} on $F_{1,2,\dots ,n-1}(\mathbb{H}^n)$ is defined as the $\mathfrak{sp}(1)^n$-valued one form  $\mathfrak{a} =(\mathfrak{a}_1,\dots ,\mathfrak{a}_n)$ with
\begin{align}\label{eq-A}
    \mathfrak{a}_j :=\frac{1}{2}\sum_{k=1}^{n-1}\frac{\overline{w}_{kj}dw_{kj} -d\overline{w}_{kj}w_{kj}}{1+|w_j|^2} .
\end{align}
\end{definition}
\begin{remark}
A straightforward calculation shows that the form associated to $\eta$ in the sense of \cite[Equation (3.5.2.)]{Baudoin2024-is} is precisely given by $\mathfrak{a} $, which motivates the terminology \emph{area form}.
\end{remark}
If we denote by $\Theta:=(\Theta_1,\dots, \Theta_n)$ a point in the spheroid $\mathrm{Sp}(1)^{n}$, the following map provides a local \emph{trivialization} of the fibration \eqref{eq:quaternionic-full-flag-fibration}. Let $\mathcal{D}$ and $\mathcal{O}$ be as given in \eqref{eq:quaternionic-domain} and \eqref{eq-O}  respectively. Define
\begin{align}\label{eq:trivialisation-quaternionic-full-flag}
\begin{split}
  \Psi:  \mathrm{Sp}(1)^{n}\times\mathcal{O}&\longrightarrow\qquad\qquad\qquad\mathcal{D} \\
    (\Theta\ ,\   w)&\longmapsto\begin{pmatrix}
        \frac{w_{11}\Theta_1}{\sqrt{1+|w_1|^2}} & \dots & \frac{w_{1n}\Theta_n}{\sqrt{1+|w_n|^2}} \\
        \vdots & \ddots & \vdots \\
        \frac{w_{(n-1)1}\Theta_1}{\sqrt{1+|w_1|^2}} & \dots & \frac{w_{(n-1)n}\Theta_n}{\sqrt{1+|w_n|^2}} \\
        \frac{\Theta_1}{\sqrt{1+|w_1|^2}} & \dots & \frac{\Theta_n}{\sqrt{1+|w_n|^2}}
    \end{pmatrix} .
\end{split}
\end{align}


Computing the connection form $\eta$ as given in \eqref{eq-eta} using the above trivialization then allows us to relate it to the stochastic area form defined in \eqref{eq-A}. In the following theorem we denote  by $\mathrm{Ad}_{g^{-1}}$ the natural $\mathrm{Ad}$-action of $\mathrm{Sp}(1)^n$ on the spheroid bundle $\mathrm{Sp}(n)$, i.e. 
\[
\mathrm{Ad}_{g^{-1}} (\eta_i (g)) :=\mathrm{Ad}_{\Theta_i^{-1}} (\eta_i (g)) ,
\]
where $g=\Psi(\Theta, w)$. 

\begin{theorem}
Let $\omega=(\omega_1,\dots,\omega_n)$ be the Maurer-Cartan form on $\mathrm{Sp}(1)^n$. Then, for every $g \in \mathrm{Sp}(n)$,
\begin{align}\label{connection form index form}
\eta_i (g)= \omega_i (g) +\mathrm{Ad}_{g^{-1}} ( \pi^* \mathfrak{a}_i(g)),
\end{align}
where $\pi^* \mathfrak{a}$ is the pull-back to $\mathrm{Sp}(n)$ of the quaternionic area form $\mathfrak{a}$ by the Riemannian submersion $\pi: \mathrm{Sp}(n)\to F_{1,2,\dots ,n-1}(\mathbb{H}^n)$. Therefore, in the trivialization  \eqref{eq:trivialisation-quaternionic-full-flag}, the connection form is given by
\begin{align}\label{connection form index form in coordinates}
\eta_i (\Psi (\Theta , w)) =  \Theta_i^{-1} d\Theta_i +\frac{1}{2} \Theta_i^{-1} \left( \sum_{k=1}^{n-1}\frac{\overline{w}_{ki}dw_{ki} -d\overline{w}_{ki}w_{ki}}{1+|w_i|^2}\right)\Theta_i .
\end{align}
\end{theorem}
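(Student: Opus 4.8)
The plan is to prove the coordinate expression \eqref{connection form index form in coordinates} first, by direct substitution of the trivialization \eqref{eq:trivialisation-quaternionic-full-flag} into the defining formula \eqref{eq-eta} for $\eta_i$, and then to read off the coordinate-free identity \eqref{connection form index form} by recognizing the two resulting summands. Writing $\rho_i := \sqrt{1+|w_i|^2}$, the $i$-th column of $\Psi(\Theta,w)$ has entries $q_{ki}=w_{ki}\Theta_i\rho_i^{-1}$ for $k<n$ and $q_{ni}=\Theta_i\rho_i^{-1}$, and since $\eta_i$ involves only the $i$-th column the computation decouples across $i$. The structural facts I would exploit are that $\rho_i$ (hence $d\rho_i$) is real and therefore central, that $|w_{ki}|^2=\overline{w}_{ki}w_{ki}$ is real, and that $\Theta_i\in\mathrm{Sp}(1)$ satisfies $\overline{\Theta}_i=\Theta_i^{-1}$, together with the Leibniz rule \eqref{leibniz rule quaternionic differential} and $d(\Theta_i^{-1})=-\Theta_i^{-1}(d\Theta_i)\Theta_i^{-1}$.

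For each $k$ I would expand $\overline{q}_{ki}\,dq_{ki}-d\overline{q}_{ki}\,q_{ki}$ and sort the terms into three types: those proportional to $d\rho_i$, those proportional to $\Theta_i^{-1}d\Theta_i$, and those carrying a factor $dw_{ki}$ or $d\overline{w}_{ki}$. The antisymmetric combination $\overline{q}\,dq-d\overline{q}\,q$ is engineered precisely so that the radial terms cancel: each of $\overline{q}_{ki}dq_{ki}$ and $d\overline{q}_{ki}q_{ki}$ yields the same real multiple of $d\rho_i$, which drops out. For $k<n$ the surviving contribution is $\tfrac{2|w_{ki}|^2}{\rho_i^2}\Theta_i^{-1}(d\Theta_i)+\tfrac{1}{\rho_i^2}\Theta_i^{-1}(\overline{w}_{ki}dw_{ki}-d\overline{w}_{ki}w_{ki})\Theta_i$, while the diagonal index $k=n$ contributes only $2\rho_i^{-2}\Theta_i^{-1}(d\Theta_i)$.

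Summing over $k$ and using the normalization identity $\sum_{k=1}^{n-1}|w_{ki}|^2+1=1+|w_i|^2=\rho_i^2$ collapses the coefficient of $\Theta_i^{-1}d\Theta_i$ to exactly $2$, so that the factor $\tfrac12$ in \eqref{eq-eta} produces the clean Maurer--Cartan term $\Theta_i^{-1}d\Theta_i=\omega_i$; the remaining terms assemble into $\tfrac12\Theta_i^{-1}\big(\sum_{k=1}^{n-1}\frac{\overline{w}_{ki}dw_{ki}-d\overline{w}_{ki}w_{ki}}{1+|w_i|^2}\big)\Theta_i$, which is \eqref{connection form index form in coordinates}. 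Recognizing this last expression as $\mathrm{Ad}_{\Theta_i^{-1}}(\pi^*\mathfrak{a}_i)$ with $\mathfrak{a}_i$ from Definition \ref{def-a} then yields the coordinate-free statement \eqref{connection form index form}; conceptually this is just the standard decomposition of a connection form in a local trivialization into a Maurer--Cartan piece plus an $\mathrm{Ad}$-twisted gauge potential, the gauge potential here being the area form. The main obstacle I anticipate is purely bookkeeping: keeping the non-commutative factors in their correct left/right order throughout the differentiation, since $\Theta_i$ does not commute with $w_{ki}$. Every genuine simplification rests on isolating the real, central quantities $|w_{ki}|^2$, $\rho_i$, $d\rho_i$ and on the normalization identity, so the real risk is an ordering slip rather than a conceptual gap.
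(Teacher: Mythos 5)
Your proposal is correct and follows essentially the same route as the paper: a direct coordinate computation in the trivialization $\Psi$, in which the radial terms cancel by the antisymmetry of $\overline{q}\,dq-d\overline{q}\,q$ and the normalization $\sum_{k=1}^{n-1}|w_{ki}|^2+1=1+|w_i|^2$ collapses the coefficient of the Maurer--Cartan term to $2$. The only difference is organizational: you substitute $q_{ki}=w_{ki}\Theta_i(1+|w_i|^2)^{-1/2}$ into the definition \eqref{eq-eta} of $\eta_i$ and simplify, whereas the paper runs the same algebra in the opposite direction, substituting $w_{ki}=q_{ki}q_{ni}^{-1}$ into $\mathfrak{a}_i$ and then solving for $\eta_i$; both rest on the identical identities (the quaternionic Leibniz rule, $d(q^{-1})=-q^{-1}(dq)q^{-1}$, and the unit-norm constraints).
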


\begin{proof}
We will first prove \eqref{connection form index form in coordinates}.
Recall that  $q_{ij} =\frac{w_{ij}\Theta_j}{\sqrt{1+|w_j|^2}}$ if $1 \le i \le n-1$, $1 \le j \le n$ and $q_{nj} =\frac{\Theta_j}{\sqrt{1+|w_j|^2}}$ for $1\leq j\leq n$. Furthermore, $w_{ij}=q_{ij}q_{nj}^{-1}$ if $1 \le i \le n-1$ and
\[
dw_{ij}=d(q_{ij}q_{nj}^{-1})=(dq_{ij})q_{nj}^{-1}+q_{ij} d(q_{nj}^{-1})=(dq_{ij})q_{nj}^{-1}-q_{ij} q_{nj}^{-1} (dq_{nj}) q_{nj}^{-1}.
\]
Here we used $d(q_{nj}^{-1})=-q_{nj}^{-1}d(q_{nj})q_{nj}^{-1}$ which follows from \eqref{leibniz rule quaternionic differential} since
\[
0=d(q_{nj}q_{nj}^{-1})=(dq_{nj})q_{nj}^{-1}+q_{nj}d(q_{nj}^{-1}).
\]
This gives
\[
\overline{w}_{ij} dw_{ij}-d\overline{w}_{ij} w_{ij}=\overline{q}^{-1}_{nj}(\overline{q}_{ij} dq_{ij}-d\overline{q}_{ij} q_{ij})q_{nj}^{-1}-|q_{ij}|^2 |q_{nj}|^{-2} \left( (dq_{nj}) q_{nj}^{-1}-\overline{q}_{nj}^{-1}d\overline{q}_{nj}\right).
\]
Using the fact that $\sum_{i=1}^n |q_{ij}|^2=1$ one deduces for any $1\le j\le n$,
\begin{align*}
 \sum_{i=1}^{n-1}(\overline{w}_{ij} dw_{ij} - &d\overline{w}_{ij} w_{ij}) \\
=&\sum_{i=1}^{n-1}\overline{q}^{-1}_{nj}(\overline{q}_{ij} dq_{ij}-d \overline{q}_{ij} q_{ij})q_{nj}^{-1}-\left(\sum_{i=1}^{n-1}|q_{ij}|^2 \right)|q_{nj}|^{-2} \left( (dq_{nj}) q_{nj}^{-1}-\overline{q}_{nj}^{-1}d\overline{q}_{nj}\right) \\
=&\sum_{i=1}^{n}\overline{q}^{-1}_{nj}(\overline{q}_{ij} dq_{ij}-d\overline{q}_{ij} q_{ij})q_{nj}^{-1}-|q_{nj}|^{-2} \left( (dq_{nj}) q_{nj}^{-1}-\overline{q}_{nj}^{-1}d\overline{q}_{nj}\right).
\end{align*}
Since $q_{nj} =\frac{\Theta_j}{\sqrt{1+|w_j|^2}}$ and therefore $|q_{nj}|^2=\frac{1}{1+|w_j|^2}$, this yields 
\begin{align}\label{eq-mid1}
 \frac{1}{2}\sum_{i=1}^{n-1}\frac{\overline{w}_{ij} dw_{ij}-d\overline{w}_{ij} w_{ij}}{1+|w_j|^2} &= \frac{1}{2}\sum_{i=1}^{n}\Theta_j (\overline{q}_{ij} dq_{ij}-d\overline{q}_{ij} q_{ij})\Theta_j^{-1}-\frac{1}{2} \left( (dq_{nj}) q_{nj}^{-1}-\overline{q}_{nj}^{-1}d\overline{q}_{nj}\right)\notag \\
&=\Theta_j \eta _j \Theta_j^{-1}-\frac{1}{2} \left( (dq_{nj}) q_{nj}^{-1}-\overline{q}_{nj}^{-1}d\overline{q}_{nj}\right) 
\end{align}
for any $1\le j\le n$.
Finally, we compute
\begin{align*}
d q_{nj} &= d\left( \frac{\Theta_j}{\sqrt{1+|w_j|^2}} \right) =\frac{d\Theta_j}{\sqrt{1+|w_j|^2}}+\Theta_j d\left( \frac{1}{\sqrt{1+|w_j|^2}} \right)
\end{align*}
so that
\begin{align}\label{eq-mid2}
\frac{1}{2} \left( (dq_{nj}) q_{nj}^{-1}-\overline{q}_{nj}^{-1}d\overline{q}_{nj}\right)=\frac{1}{2} \left( (d\Theta_j)\Theta_j^{-1}-\overline{\Theta}_j^{-1} d\overline{\Theta}_j \right)=(d\Theta_j)\Theta_j^{-1}.
\end{align}
Combining \eqref{eq-mid1} and \eqref{eq-mid2} we obtain \eqref{connection form index form in coordinates}. Lastly \eqref{connection form index form} follows from the fact that $\omega_i(\Psi (\Theta , w))=\Theta_i^{-1} d\Theta_i$ and that
\[
\pi^* \mathfrak{a}_i(\Psi (\Theta , w))=\sum_{k=1}^{n-1}\frac{\overline{w}_{ki}dw_{ki} -d\overline{w}_{ki}w_{ki}}{1+|w_i|^2}
\]
because $\pi (\Psi (\Theta , w))$ is precisely the point in $F_{1,2,\dots ,n-1}(\mathbb{H}^n)$ parametrized by \eqref{local affine parametrization}. 

\end{proof}

\subsection{Horizontal Brownian motion on \texorpdfstring{$\mathrm{Sp}(n)$}{Sp(n)} and stochastic areas}
In this section we construct the stochastic area process using the stochastic line integral of the area form introduced above. We also construct the horizontal Brownian motion on $\mathrm{Sp}(n)$ as the horizontal lift of the Brownian motion on $F_{1,2,\dots ,n-1}(\mathbb{H}^n)$. As we will see, in the trivialization \eqref{eq:trivialisation-quaternionic-full-flag}, the fiber motion of the horizontal Brownian motion can be expressed in terms of the quaternionic stochastic area form. This aligns with the general properties of  Riemannian submersion with totally geodesic fibers, see \cite[Theorem 3.1.10]{Baudoin2024-is}.

\begin{definition}\label{definition stochastic area}
    Let $(w(t))_{t\geq 0}$ be a Brownian motion on $F_{1,2,\dots ,n-1}(\mathbb{H}^n)$ as defined in \eqref{BM on flag}. Let $\mathfrak{a}$ be the area form as given in Definition \ref{def-a}.  The \emph{quaternionic stochastic area processes} are given by
    \begin{align}\label{eq-st-area}
        \mathfrak{a}_j(t):=\int_{w_j[0,t]}\mathfrak{a}_j =\frac{1}{2}\sum_{k=1}^n \int_0^t \frac{\overline{w}_{kj}(s)dw_{kj}(s) -d\overline{w}_{kj}(s)w_{kj}(s)}{1+|w_j(s)|^2},\quad  1\leq j\leq n,
    \end{align}
    where the stochastic line integrals can equivalently be understood in the Stratonovich or Itō sense.
    We write $\mathfrak{a}(t) :=(\mathfrak{a}_1(t),\dots ,\mathfrak{a}_n(t))$.
\end{definition}
\begin{definition}
Let $X_{jk}^{(a)}$, $1\le j<k\le n, a=0,1, 2, 3$ be the left-invariant orthonormal frame of the horizontal bundle $\mathcal{H}$ as given in \eqref{horizontal bundle}. The horizontal Laplacian of the submersion $\mathrm{Sp}(n) \to F_{1,2,\dots ,n-1}(\mathbb{H}^n)$ is then given by
\[
\Delta_\mathcal{H}=\sum_{1\le j<k\le n}\ \sum_{a=0}^3 \big(X_{jk}^{(a)}\big)^{\!2}.
\]
The \emph{horizontal Brownian motion} on $\mathrm{Sp}(n)$ is the diffusion on $\mathrm{Sp}(n)$ with generator $\frac{1}{2} \Delta_\mathcal{H}$.  
\end{definition}

We now show that the horizontal  Brownian motion on $\mathrm{Sp}(n)$ can obtained as the horizontal lift of Brownian motion on $F_{1,2,\dots ,n-1}(\mathbb{H}^n)$, with the fiber motion given by the stochastic area processes.
\begin{theorem}\label{prop:horizontal-BM-symplectic-group}
    Let $(w(t))_{t\ge 0}$ be a Brownian motion on $F_{1,2,\dots ,n-1}(\mathbb{H}^n)$, and let $(\mathfrak{a}(t))_{t\ge 0}$ denote its associated quaternionic stochastic area processes. Define the $\mathrm{Sp}(1)^n$-valued process $(\Theta(t))_{t\ge 0}$ as the solution to the Stratonovich stochastic differential equations
    \begin{align}\label{eq:Theta-process}
        d\Theta_j(t) = -\,\circ d\mathfrak{a}_j(t)\,\Theta_j(t),
        \qquad 1 \le j \le n .
    \end{align}
    
    Then, the $\mathrm{Sp}(n)$-valued diffusion process
    \[
        X(t) :=
        \begin{pmatrix}
            \frac{w_{11}(t)\,\Theta_1(t)}{\sqrt{1+|w_1(t)|^2}} & \dots & \frac{w_{1n}(t)\,\Theta_n(t)}{\sqrt{1+|w_n(t)|^2}} \\
            \vdots & \ddots & \vdots \\
            \frac{w_{(n-1)1}(t)\,\Theta_1(t)}{\sqrt{1+|w_1(t)|^2}} & \dots & \frac{w_{(n-1)n}(t)\,\Theta_n(t)}{\sqrt{1+|w_n(t)|^2}} \\
            \frac{\Theta_1(t)}{\sqrt{1+|w_1(t)|^2}} & \dots & \frac{\Theta_n(t)}{\sqrt{1+|w_n(t)|^2}}
        \end{pmatrix}
    \]
    is a horizontal Brownian motion on $\mathrm{Sp}(n)$.
\end{theorem}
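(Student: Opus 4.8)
The plan is to recognize the process $X(t)$ as the \emph{horizontal lift} of the base Brownian motion $(w(t))_{t\ge 0}$, and then invoke the general principle for Riemannian submersions with totally geodesic fibers: the horizontal lift of a Brownian motion on the base is precisely the horizontal Brownian motion on the total space, i.e.\ the diffusion with generator $\frac{1}{2}\Delta_{\mathcal{H}}$ (this is the content of \cite[Theorem 3.1.10]{Baudoin2024-is}, the general framework the section already relies on). Since the coupled system defining $(w(t),\Theta(t))$ is a Stratonovich SDE, $X(t)=\Psi(\Theta(t),w(t))$ is automatically a diffusion, and the whole task reduces to verifying the two characterizing properties of a horizontal lift: that $X(t)$ projects onto $w(t)$, and that its Stratonovich velocity is horizontal, i.e.\ $\eta_i(\circ\, dX(t))=0$ for each $i$.

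The projection condition is immediate from the construction. By definition $X(t)=\Psi(\Theta(t),w(t))$ in the trivialization \eqref{eq:trivialisation-quaternionic-full-flag}, and as recorded in the proof of \eqref{connection form index form in coordinates}, the point $\pi(\Psi(\Theta,w))$ is exactly the flag point parametrized by $w$ via \eqref{local affine parametrization}, independently of the fiber coordinate $\Theta\in\mathrm{Sp}(1)^n$. Hence $\pi(X(t))=w(t)$, a Brownian motion on $F_{1,2,\dots ,n-1}(\mathbb{H}^n)$ in the sense of \eqref{BM on flag}. One should also note in passing that $X(t)$ is globally well defined in $\mathrm{Sp}(n)$: the one-form $\overline{w}\,dw-d\overline{w}\,w$ is purely imaginary, so $\mathfrak{a}_i$ is $\mathfrak{sp}(1)$-valued and the right-invariant SDE \eqref{eq:Theta-process} keeps $\Theta_j(t)$ in $\mathrm{Sp}(1)$, while Lemma \ref{polarity} guarantees that $w(t)$ stays in the coordinate patch $\mathcal{O}$ for all $t\ge 0$ almost surely.

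The horizontality condition is the heart of the argument, and it is here that the precise form of the defining SDE \eqref{eq:Theta-process} is used. Substituting the semimartingale $(\Theta(t),w(t))$ into the coordinate expression \eqref{connection form index form in coordinates} and applying the Stratonovich chain rule (the reason for working with Stratonovich rather than It\^o integrals throughout the section) yields an honest identity of differentials,
\[
\eta_i(\circ\, dX(t)) = \Theta_i(t)^{-1}\circ d\Theta_i(t) + \Theta_i(t)^{-1}\big(\circ\, d\mathfrak{a}_i(t)\big)\,\Theta_i(t),
\]
where the bracketed sum appearing in \eqref{connection form index form in coordinates} is, by Definition \ref{def-a}, exactly $2\mathfrak{a}_i$ along $w(t)$, so that the second term reads $\Theta_i^{-1}(\circ\, d\mathfrak{a}_i(t))\Theta_i$ in the notation of \eqref{eq-st-area}. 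Inserting the defining relation $\circ\, d\Theta_i(t) = -\,(\circ\, d\mathfrak{a}_i(t))\,\Theta_i(t)$ from \eqref{eq:Theta-process} gives $\Theta_i^{-1}\circ d\Theta_i = -\,\Theta_i^{-1}(\circ\, d\mathfrak{a}_i(t))\Theta_i$, and the two terms cancel exactly, so $\eta_i(\circ\, dX(t))=0$ for every $i$. Thus the velocity of $X(t)$ is horizontal.

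Having verified both properties, $X(t)$ is the horizontal lift of the base Brownian motion $w(t)$, and the cited submersion theory identifies its generator as $\frac{1}{2}\Delta_{\mathcal{H}}$, completing the proof. I expect the only genuine subtlety to be conceptual rather than computational: one must be careful that the passage from the pathwise notion of ``horizontal lift'' (a solution of the horizontality SDE projecting onto $w$) to the analytic characterization of the horizontal Brownian motion as the $\frac{1}{2}\Delta_{\mathcal{H}}$-diffusion is exactly the statement supplied by the totally geodesic submersion framework, and that the connection-form computation above is legitimate as an identity of one-forms pulled back along a semimartingale, which is precisely what the Stratonovich formalism secures.
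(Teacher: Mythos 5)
Your argument is correct and follows essentially the same route as the paper's own proof: verify that $X(t)=\Psi(\Theta(t),w(t))$ projects onto $w(t)$, then use the coordinate expression \eqref{connection form index form in coordinates} of the connection form together with the defining SDE \eqref{eq:Theta-process} to show the stochastic line integral of $\eta$ along $X$ vanishes, so that $X$ is the horizontal lift of the base Brownian motion. The additional remarks on well-posedness (that $\mathfrak{a}_i$ is $\mathfrak{sp}(1)$-valued and that Lemma \ref{polarity} keeps $w(t)$ in the coordinate patch) are sound and harmless.
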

\begin{proof}
One could appeal directly to \cite[Theorem 3.5.2]{Baudoin2024-is}, but there is  a more direct argument that we present below. To prove that $(X(t))_{t\geq 0}$ is a horizontal Brownian motion, it suffices to show that $(X(t))_{t\geq 0}$ is the horizontal lift to $\mathrm{Sp}(n)$ of the process $(w(t))_{t\geq 0}$. Since it is immediate that
\[
p(X(t))=w(t),
\]
it remains to verify that $(X(t))_{t\geq 0}$ is a horizontal process. Since the horizontal bundle is the kernel of the connection form $\eta$, we just need to prove that $\int_{X[0,t]} \eta=0$. From \eqref{connection form index form in coordinates}, we compute
\begin{align*}
\int_{X[0,t]} \eta_i &=\int_{0}^t  \Theta_i^{-1} (s) \circ d\Theta_i (s)+\frac{1}{2}\int_0^t  \Theta_i^{-1}(s) \left( \sum_{k=1}^{n-1}\frac{\overline{w}_{ki}(s)dw_{ki}(s) -(d\overline{w}_{ki}(s))w_{ki}(s)}{1+|w_i(s)|^2}\right)\Theta_i(s)\\
 &=-\int_{0}^t  \Theta_i^{-1} (s) \left( \circ d\mathfrak{a}_i (s)\right) \Theta_i (s) \\
 &\qquad\qquad\qquad\quad +\frac{1}{2}\int_0^t  \Theta_i^{-1}(s) \left( \sum_{k=1}^{n-1}\frac{\overline{w}_{ki}(s)dw_{ki}(s) -(d\overline{w}_{ki}(s))w_{ki}(s)}{1+|w_i(s)|^2}\right)\Theta_i(s)
 =0.
\end{align*}
The last equality follows from \eqref{eq-A}. 
\end{proof}

The above skew-product representation of the horizontal Brownian motion also yields a skew-product representation of the symplectic Brownian motion itself.

\begin{corollary}\label{cor:BM-symplectic-group-in-terms-of-w}
    Let $(w(t))_{t\geq 0}$, $(\mathfrak{a}(t))_{t\geq 0}$, and $(\Theta (t))_{t\geq 0}$ be as given in Theorem \ref{prop:horizontal-BM-symplectic-group}. Let $(\beta (t))_{t\geq 0}$ a left-invariant\footnote{The metric on $\mathrm{Sp}(1)^n$ is bi-invariant, so a left-invariant Brownian is also a right-invariant one.} Brownian motion on $\mathrm{Sp}(1)^n$ independent of $(w(t))_{t\geq 0}$ and $(\Theta (t))_{t\geq 0}$.  Then, the   process
    \begin{align*}
        U(t) :=\begin{pmatrix}
        \frac{w_{11}(t)\Theta_1 (t)\beta_1(t)}{\sqrt{1+|w_1(t)|^2}} & \dots & \frac{w_{1n}(t)\Theta_n(t)\beta_n(t)}{\sqrt{1+|w_n(t)|^2}} \\
        \vdots & \ddots & \vdots \\
        \frac{w_{(n-1)1}(t)\Theta_1 (t)\beta_1(t)}{\sqrt{1+|w_1(t)|^2}} & \dots & \frac{w_{(n-1)n}(t)\Theta_n(t)\beta_n(t)}{\sqrt{1+|w_n(t)|^2}} \\
        \frac{\Theta_1(t)\beta_1(t)}{\sqrt{1+|w_1(t)|^2}} & \dots & \frac{\Theta_n(t)\beta_n(t)}{\sqrt{1+|w_n(t)|^2}}
    \end{pmatrix},\quad t\ge0
    \end{align*}
    is a Brownian motion on $\mathrm{Sp}(n)$. In particular the stochastic line integral
    \begin{align*}
        \eta (t):=\int_{U[0,t]}\eta
    \end{align*}
    is a Brownian motion on $\mathfrak{sp}(1)^n$.
  \end{corollary}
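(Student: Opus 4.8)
The plan is to recognize the asserted process as the horizontal Brownian motion of Theorem~\ref{prop:horizontal-BM-symplectic-group} right-translated along the fibre by the independent spheroid-valued motion $B(t):=\mathrm{diag}(\beta_1(t),\dots,\beta_n(t))\in\mathrm{Sp}(1)^n$. Comparing the two matrices column by column shows that $U(t)=X(t)\,B(t)$, so the statement reduces to two facts: that right-multiplication of a horizontal Brownian motion by an independent fibre Brownian motion produces a Brownian motion on $\mathrm{Sp}(n)$, and that the associated connection-form integral recovers exactly the vertical driving noise.

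For the first assertion I would work at the level of Stratonovich SDEs. Writing $dX=X\circ dM$ and $dB=B\circ dN$, where $M$ is the horizontal driving noise (a Brownian motion valued in $\mathcal H_e=\mathrm{span}\{X_{jk}^{(a)}\}$ with identity covariance, by Theorem~\ref{prop:horizontal-BM-symplectic-group}) and $N$ the vertical one (a Brownian motion in $\mathcal V_e=\mathfrak{sp}(1)^n$ driven by $\beta$), the Stratonovich Leibniz rule gives
\[
dU=U\circ d\Xi,\qquad d\Xi:=\mathrm{Ad}_{B^{-1}}(\circ\,dM)+\circ\,dN .
\]
Here I would use that $B(t)$ is diagonal, so that $\mathrm{Ad}_{B^{-1}(t)}$ is an orthogonal transformation preserving both $\mathcal H_e$ and $\mathcal V_e$ (this uses the $\mathrm{Ad}$-invariance of the Hilbert--Schmidt metric). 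Since $\beta$ is independent of $(w,\Theta)$ and $M$ is a pathwise functional of $X$, hence of $(w,\Theta)$, the noise $M$ is independent of $B$ and of $N$. Conditioning on the path of $\beta$, the process $\int_0^t\mathrm{Ad}_{B^{-1}}\circ\,dM$ is the image of the standard Brownian motion $M$ under a deterministic orthogonal-valued rotation of $\mathcal H_e$, hence again a standard Brownian motion on $\mathcal H_e$ whose law does not depend on $\beta$; it is therefore Brownian and independent of $N$. As $\mathcal H_e\perp\mathcal V_e$, the sum $\Xi$ is a Brownian motion on $\mathfrak{sp}(n)$, so $dU=U\circ d\Xi$ is precisely the Stratonovich equation of the (bi-invariant) symplectic Brownian motion.

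For the second assertion I would simply evaluate the connection form along $U$. Since $U(t)=\Psi(\Theta(t)\beta(t),w(t))$, formula~\eqref{connection form index form in coordinates} with $\Phi_i:=\Theta_i\beta_i$ gives $\eta_i(U)=\Phi_i^{-1}\circ d\Phi_i+\Phi_i^{-1}(\circ\,d\mathfrak{a}_i)\Phi_i$. Expanding $\Phi_i^{-1}\circ d\Phi_i=\beta_i^{-1}(\Theta_i^{-1}\circ d\Theta_i)\beta_i+\beta_i^{-1}\circ d\beta_i$ and substituting the defining equation $\Theta_i^{-1}\circ d\Theta_i=-\Theta_i^{-1}(\circ\,d\mathfrak{a}_i)\Theta_i$ from \eqref{eq:Theta-process}, the two area-form contributions cancel, leaving $\eta_i(U)=\beta_i^{-1}\circ d\beta_i$. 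Integrating, $\int_{U[0,t]}\eta_i=\int_0^t\beta_i^{-1}\circ d\beta_i$, which is the anti-development of the left-invariant Brownian motion $\beta_i$ on $\mathrm{Sp}(1)$ and hence a Brownian motion on $\mathfrak{sp}(1)$; collecting the components shows $(\eta(t))_{t\ge 0}$ is a Brownian motion on $\mathfrak{sp}(1)^n$.

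I expect the main obstacle to be the first assertion, specifically the justification that $\int\mathrm{Ad}_{B^{-1}}\circ\,dM$ is again a Brownian motion: this is where one must carefully use (i) the independence of $\beta$ from the horizontal noise $M$, both to pass between Stratonovich and Itō integrals (the cross-variation vanishes) and to run the conditioning argument, and (ii) the orthogonality of $\mathrm{Ad}_{B^{-1}}$ on $\mathcal H_e$, which relies on the diagonal form of $B$ together with the bi-invariance of the metric. The cancellation in the second assertion, by contrast, is a short and purely algebraic consequence of the Stratonovich Leibniz rule and the definition of $\Theta$.
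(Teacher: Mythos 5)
Your proof is correct, and the second half (evaluating the connection form along $U$ via the trivialization \eqref{connection form index form in coordinates} with $\Phi_i=\Theta_i\beta_i$ and letting the area-form terms cancel against \eqref{eq:Theta-process}) is exactly the computation in the paper. Where you genuinely diverge is in showing that $U=XB$ is a symplectic Brownian motion. You work pathwise: writing $dX=X\circ dM$, $dB=B\circ dN$ and deriving $dU=U\circ d\Xi$ with $d\Xi=\mathrm{Ad}_{B^{-1}}(\circ\,dM)+\circ\,dN$, then arguing via the orthogonality of $\mathrm{Ad}_{B^{-1}}$ on $\mathcal{H}_e$ and independence that $\Xi$ is a Brownian motion on $\mathfrak{sp}(n)$ (your conditioning step can be streamlined into a single application of L\'evy's characterization in the joint filtration of $M$ and $\beta$, since the integrand is adapted, pointwise orthogonal, and the cross-variation with $N$ vanishes). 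The paper instead argues at the level of semigroups: it conditions on $X(t)$, uses that conjugation by elements of $\mathrm{Sp}(1)^n$ is an isometry preserving the horizontal bundle to show $\mathbb{E}(f(U(t)))=e^{\frac{1}{2}t\Delta_{\mathcal H}}e^{\frac{1}{2}t\Delta_{\mathcal V}}f(U(0))$, and concludes from the commutation of $\Delta_{\mathcal H}$ and $\Delta_{\mathcal V}$ that the generator is $\frac{1}{2}\Delta_{\mathrm{Sp}(n)}$. Both routes hinge on the same geometric input --- the $\mathrm{Ad}$-invariance of the horizontal/vertical splitting under $\mathrm{Sp}(1)^n$ --- but your SDE version additionally exhibits the driving Lie-algebra Brownian motion $\Xi$ explicitly (which is consistent with the Stratonovich representation the paper mentions but does not use), at the cost of having to be careful about filtrations and the Stratonovich-to-It\=o passage, while the paper's semigroup argument sidesteps all stochastic calculus on the group and identifies the generator directly.
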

\begin{proof}
From the previous theorem, it is enough to prove that if $(X(t))_{t\geq 0}$ is a horizontal Brownian motion on $\mathrm{Sp}(n)$ and $(\beta_1(t),\dots,\beta_n(t))_{t\geq 0}$ an independent Brownian motion on $\mathrm{Sp}(1)^n$, then the process
\[
U(t)=X(t) \begin{pmatrix}
\beta_1(t) & 0 & \cdots & 0 \\
0 & \beta_2(t) & \cdots & 0 \\
\vdots & \vdots & \ddots & \vdots \\
0 & 0 & \cdots & \beta_n(t)
\end{pmatrix}
\]
is a Brownian motion on $\mathrm{Sp}(n)$. Observe first that the generator of the process $(X(t))_{t\geq 0}$ is
\[
\frac{1}{2}\Delta_\mathcal{H}=\frac{1}{2} \sum_{1\le j<k\le n}\ \sum_{a=0}^3 \big(X_{jk}^{(a)}\big)^{\!2}.
\]
and that the generator of the process 
\[
(\beta(t))_{t \ge 0}:=\left(\begin{pmatrix}
\beta_1(t) & 0 & \cdots & 0 \\
0 & \beta_2(t) & \cdots & 0 \\
\vdots & \vdots & \ddots & \vdots \\
0 & 0 & \cdots & \beta_n(t)
\end{pmatrix}\right)_{t\geq 0}
\]
is
\[
\frac{1}{2}\Delta_\mathcal{V}=\frac{1}{2}  \sum_{j=1}^n \sum_{a=1}^3 \big(H_j^{(a)}\big)^{\!2}.
\]
Then, for every bounded Borel function $f$, one  has
\begin{align*}
    \mathbb{E} \left( f(U(t)) \right)&=\mathbb{E} \left( f(X(t)\beta (t)) \right) \\
     &=\int_{\mathrm{Sp}(n)} \mathbb{E} \left( f(g\beta (t)) \mid X(t)=g \right) d\mathbb{P}_{X(t)} (g) \\
     &=\int_{\mathrm{Sp}(n)} \mathbb{E} \left( f(g\beta (t)) \right) d\mathbb{P}_{X(t)} (g) \\
     &=\int_{\mathrm{Sp}(n)} (e^{\frac{1}{2}t \Delta_\mathcal{V}} f)(g\beta(0)) d\mathbb{P}_{X(t)} (g) \\
     &=e^{\frac{1}{2} t \Delta_\mathcal{H}}  (e^{\frac{1}{2}t \Delta_\mathcal{V}} f) (U(0)) ,
\end{align*}
where in the last step we used the fact that for every $h \in \mathrm{Sp}(1)^n$ the map $ g \to h g h^{-1}$ is an isometry of $\mathrm{Sp}(n)$ which preserves the horizontal bundle, so that for any function $ \phi$, $e^{\frac{1}{2} t \Delta_\mathcal{H}} \phi (h g h^{-1})=e^{\frac{1}{2} t \Delta_\mathcal{H}} \phi (g)$. Then, the two operators $\Delta_\mathcal{H}$ and $\Delta_\mathcal{V}$ commute, which allows to conclude that the generator of $(U(t))_{t\geq 0}$ is
\[
\frac{1}{2}\Delta_\mathcal{H}+\frac{1}{2}\Delta_\mathcal{V}=\frac{1}{2}\Delta_{\mathrm{Sp}(n)} ,
\]
 which proves that $(U(t))_{t\geq 0}$ is a Brownian motion on $\mathrm{Sp}(n)$. 
 
 Finally, using the relation \eqref{connection form index form in coordinates} and the definition \eqref{eq:Theta-process} of $(\Theta (t))_{t\geq 0}$, we obtain
\begin{align*}
\int_{U[0,t]} \eta_i 
=&-\int_{0}^t \beta_i^{-1} (s) \Theta_i^{-1} (s) \left( \circ d \mathfrak{a}_i (s)\right) \Theta_i (s)\beta_i(s) + \int_{0}^t \beta_i^{-1} (s) \circ d\beta_i(s) \\
&+\frac{1}{2}\int_0^t  \beta_i^{-1} (s)\Theta_i^{-1}(s) \left( \sum_{k=1}^{n-1}\frac{\overline{w}_{ki}(s)\circ dw_{ki}(s) -(\circ d\overline{w}_{ki}(s))w_{ki}(s)}{1+|w_i(s)|^2}\right)\Theta_i(s)\beta_i(s)\\
 =&\int_{0}^t \beta_i^{-1} (s) \circ d\beta_i(s) .
\end{align*}
Since  the process 
\[
\left(\int_{0}^t \beta_i^{-1} (s) \circ d\beta_i(s)\right)_{t\geq 0}
\]
is a Brownian motion on $\mathfrak{sp}(1)$, because $(\beta_i(t))_{t\geq 0}$ is a left-invariant Brownian motion on $\mathrm{Sp}(1)$, the conclusion follows.
\end{proof}

\subsection{Joint generator of the radial processes and the stochastic areas}
From the expression \eqref{eq-st-area} of the stochastic area processes, one can observe that, similarly to what occurs in other models of Riemannian submersions (see \cite{Baudoin2024-is}), these processes can be realized as time changed Brownian motions, where the time change is determined by the radial processes squared $ (\lambda(t))_{t\ge0}:=(\lambda_j(t))_{t\ge0, j=1,\dots,n}$ associated with Brownian motion on $F_{1,2,\dots ,n-1}(\mathbb{H}^n)$. For this reason, it is natural to consider the joint process $(\lambda (t),\mathfrak{a}(t))_{t\geq 0}$. This section is devoted to deriving the generator of this joint process.
\begin{theorem}\label{thm:diffusion-quaternionic-area}
     Let $(w(t))_{t\geq 0}$ be a Brownian motion on $F_{1,2,\dots ,n-1}(\mathbb{H}^n)$, $(\mathfrak{a}(t))_{t\geq 0}$ its quaternionic stochastic area processes and let
     \[
     \lambda_j(t)=\frac{1}{1+|w_j(t)|^2},\quad 1\leq j\leq n
     \]
     be as given in \eqref{radial variables}.
      The stochastic process $(\lambda (t),\mathfrak{a}(t))_{t\geq 0}$ is a diffusion with generator
    \begin{align*}
        L:=&2\sum_{j=1}^n\lambda_j(1-\lambda_j )\frac{\partial^2}{\partial\lambda_j^2} 
        -2\sum_{1\leq j\neq\ell\leq n}\lambda_j\lambda_{\ell}\frac{\partial^2}{\partial\lambda_j\partial\lambda_{\ell}}
        +2\sum_{j=1}^n(2-2n\lambda_j )\frac{\partial}{\partial\lambda_j } \\
        &+\frac{1}{2}\sum_{j=1}^n\left(\frac{1}{\lambda_j}-1\right)\left( \sum_{\alpha=1}^3 \frac{\partial^2}{\partial (\mathfrak{a}^\alpha_{j})^2 }\right)
        +\frac{1}{2}\sum_{1\leq j\neq\ell\leq n} \sum_{\alpha=1}^3 \frac{\partial^2}{\partial \mathfrak{a}^\alpha_{j}\partial \mathfrak{a}^\alpha_{\ell} } .
    \end{align*}
\end{theorem}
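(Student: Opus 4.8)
The plan is to push everything down to the last row $\xi(t)=(q_{n1}(t),\dots,q_{nn}(t))$ of the symplectic Brownian motion, for which $\lambda_j=|q_{nj}|^2$, and to its phase windings. The first point I would stress is that, although $\xi$ takes values in $\mathbb{S}^{4n-1}$, it is \emph{not} a round Brownian motion: its generator is the restriction to the last row of $\tfrac12\Delta_{\mathrm{Sp}(n)}$, i.e. of the squares of the fields $X_{jk}^{(a)},H_j^{(a)}$ of Lemma \ref{generators sp(n)}, and the factor $\sqrt2$ in $H_j^{(a)}=\sqrt2\,E_{jj}\mathbf e_a$ dilates the diffusion along the quaternionic Hopf directions. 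Writing $q_{nj}=\sum_{a=0}^3\xi^{(j)}_a\mathbf e_a$ and using $d\langle f,g\rangle=\sum_m(Z_m f)(Z_m g)\,dt$ for a generator $\tfrac12\sum_m Z_m^2$, together with the elementary identities $\sum_{c=0}^3\Rea(\overline{\mathbf e}_a q\,\mathbf e_c)\Rea(\overline{\mathbf e}_b q\,\mathbf e_c)=|q|^2\delta_{ab}$ and $\sum_b\Rea(A\mathbf e_b)\Rea(\overline{\mathbf e}_b C)=\Rea(AC)$, I would compute the entry covariations. The result is $d\langle\xi^{(j)}_a,\xi^{(j)}_b\rangle=\big((1+\lambda_j)\delta_{ab}-2\xi^{(j)}_a\xi^{(j)}_b\big)\,dt$ and, decisively, the off-diagonal block $d\langle\xi^{(j)}_a,\xi^{(\ell)}_b\rangle=-\Rea(\overline{\mathbf e}_a q_{n\ell}\,\overline{\mathbf e}_b q_{nj})\,dt$ for $j\neq\ell$ (note that for the radial functions $\lambda_j$ the anisotropy is invisible, which is why Theorem \ref{thm:generator-quaternionic-radial-process} could be proved with the round Laplacian).

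Next I would relate the area processes to these windings. Setting $q_{nj}=\sqrt{\lambda_j}\,\phi_j$ with $\phi_j\in\mathrm{Sp}(1)$ and $\gamma_j:=\mathrm{Im}\big((\circ dq_{nj})\overline q_{nj}\big)/\lambda_j=(\circ d\phi_j)\phi_j^{-1}$, the coordinate form \eqref{connection form index form in coordinates} of the connection, applied along the symplectic motion of Corollary \ref{cor:BM-symplectic-group-in-terms-of-w} whose $j$-th phase is $\phi_j=\Theta_j\beta_j$, yields $\mathfrak a_j=\mathrm{Ad}_{\phi_j}b_j-\gamma_j$, where $b_j=\int\eta_j$ is by that corollary a Brownian motion on $\mathfrak{sp}(1)$ driven by the \emph{independent} fibre factor $\beta_j$. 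Since $\mathfrak a_j$ is a functional of the flag motion $w$ alone while $b_j$ is driven by $\beta_j$, the two martingale parts are orthogonal, so for all $j,\ell$ one gets $d\langle\mathfrak a_j,\mathfrak a_\ell\rangle=d\langle\gamma_j,\gamma_\ell\rangle-d\langle\mathrm{Ad}_{\phi_j}b_j,\mathrm{Ad}_{\phi_\ell}b_\ell\rangle$. The subtracted term vanishes for $j\neq\ell$ (independent factors), and because $\mathrm{Ad}$ is an isometry of $\mathfrak{sp}(1)$ while $H_j^{(a)}=\sqrt2\,E_{jj}\mathbf e_a$ forces $d\langle b_j^\alpha,b_j^{\alpha'}\rangle=2\delta_{\alpha\alpha'}dt$, it equals $2\delta_{\alpha\alpha'}\delta_{j\ell}\,dt$.

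The core computation is then $d\langle\gamma_j^\alpha,\gamma_\ell^{\alpha'}\rangle$, which I would obtain by inserting $\gamma_j^\alpha=\tfrac1{\lambda_j}\sum_b\Rea(\overline{\mathbf e}_\alpha\mathbf e_b\overline q_{nj})\,d\xi^{(j)}_b$ into the covariations above and collapsing the $b$-sums with the $\Rea(AC)$ identity; the repeated use of $\Rea(\overline q\,X\,q)=0$ for imaginary $X$ and of $\overline{\mathbf e}_{\alpha'}\overline{\mathbf e}_\alpha=-\delta_{\alpha\alpha'}+(\text{imaginary})$ makes the off-diagonal-in-$\alpha$ contributions drop out. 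This gives $d\langle\gamma_j^\alpha,\gamma_j^{\alpha'}\rangle=\tfrac{1+\lambda_j}{\lambda_j}\delta_{\alpha\alpha'}\,dt$ and $d\langle\gamma_j^\alpha,\gamma_\ell^{\alpha'}\rangle=\delta_{\alpha\alpha'}\,dt$ for $j\neq\ell$; subtracting the fibre term produces exactly $d\langle\mathfrak a_j^\alpha,\mathfrak a_j^{\alpha'}\rangle=\big(\tfrac1{\lambda_j}-1\big)\delta_{\alpha\alpha'}\,dt$ and $d\langle\mathfrak a_j^\alpha,\mathfrak a_\ell^{\alpha'}\rangle=\delta_{\alpha\alpha'}\,dt$ for $j\neq\ell$. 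A shorter contraction gives $d\langle\lambda_j,\mathfrak a_\ell^\alpha\rangle=0$, so no mixed radial--area second-order terms appear. Finally I would verify that each $\mathfrak a_j^\alpha$ is a local martingale, i.e. that the winding form has vanishing Itō drift, either by a direct drift computation on $\xi$ from $\mathcal L_\xi$ using the same identities, or by appealing to the submersion framework of \cite[Theorem 3.5.2]{Baudoin2024-is}. Combining these brackets and drifts with the radial drift of Theorem \ref{thm:generator-quaternionic-radial-process}, and observing that every coefficient depends on $\lambda$ only, shows $(\lambda,\mathfrak a)$ is a Markov diffusion and reads off $L$.

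The main obstacle is the geometric input of the first two steps: recognizing that the last-row diffusion is the \emph{anisotropic} (Berger-type) sphere process rather than the round one, and correctly evaluating its off-diagonal covariation $-\Rea(\overline{\mathbf e}_a q_{n\ell}\,\overline{\mathbf e}_b q_{nj})$. It is exactly this block — a manifestation of quaternionic non-commutativity and of the orthonormality of the columns of $U$ — that yields the nonzero, \emph{constant} cross-correlation $d\langle\mathfrak a_j^\alpha,\mathfrak a_\ell^\alpha\rangle=dt$; a naive round-sphere computation makes this covariation vanish and misses the coupling entirely. Equally essential is tracking the fibre-subtraction constant $2$, tied to the $\sqrt2$ normalization of $H_j^{(a)}$, without which the diagonal coefficient $\tfrac1{\lambda_j}-1$ comes out wrong.
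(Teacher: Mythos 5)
Your argument is correct and arrives at exactly the covariation structure in the statement, but it follows a genuinely different route from the paper. The paper never touches the last-row diffusion: it works upstairs on $\mathrm{Sp}(n)$, uses the identity $\eta_i=\omega_i+\mathrm{Ad}_{g^{-1}}(\pi^*\mathfrak a_i)$ to reduce everything to the cometric pairings $g_{\mathrm{Sp}(n)}\bigl((\omega_l^\alpha)_{\mathcal H},(\omega_r^{\alpha'})_{\mathcal H}\bigr)$, and evaluates these by expanding the horizontal projections of the Maurer--Cartan forms in the dual frame $\mathcal X^a_{jk}$. You instead push the computation down to the last row $\xi$ and to the quaternionic phases, writing $\mathfrak a_j=\mathrm{Ad}_{\phi_j}b_j-\gamma_j$ (which is indeed equivalent to the defining SDE $d\Theta_j=-\circ d\mathfrak a_j\,\Theta_j$ combined with $\phi_j=\Theta_j\beta_j$) and subtracting the independent fibre noise. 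I checked your key intermediate formulas: the block covariations $\bigl((1+\lambda_j)\delta_{ab}-2\xi^{(j)}_a\xi^{(j)}_b\bigr)dt$ and $-\Rea(\overline{\mathbf e}_a q_{n\ell}\overline{\mathbf e}_b q_{nj})\,dt$ are what the frame of Lemma \ref{generators sp(n)} produces, and contracting them against the \emph{space-frame} winding coefficients $\mathbf e_\alpha q_{nj}/\lambda_j$ does collapse (via $q_{nj}\bar q_{nj}=\lambda_j$) to $\tfrac{1+\lambda_j}{\lambda_j}\delta_{\alpha\alpha'}$ and $\delta_{\alpha\alpha'}$; with the body-frame convention $q_{nj}\mathbf e_\alpha$ the cross term would instead be a random rotation matrix, so your choice of $\gamma_j=(\circ d\phi_j)\phi_j^{-1}$ is not cosmetic but essential. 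Your approach buys an explicit explanation of where the constant cross-correlation comes from (column orthonormality of $U$ seen through the off-diagonal block) and makes the fibre-speed bookkeeping ($2\delta_{\alpha\alpha'}$ from the $\sqrt2$ in $H_j^{(a)}$) visible; the paper's approach is shorter because the isometry of $\mathrm{Ad}$ and the verticality of $\eta$ dispose of the fibre and conjugation terms in one line, and because horizontality of $d\lambda_i$ gives $\langle\lambda_i,\mathfrak a_j\rangle=0$ immediately. Two remarks. First, your observation that $\xi$ is a Berger-type rather than round spherical Brownian motion is in tension with the paper's own Lemma \ref{polarity}, but it is the correct reading of the frame as normalized in the paper, and it only affects the $\mathrm{Sp}(1)^n$-orbit directions, which is why the radial Theorem \ref{thm:generator-quaternionic-radial-process} is insensitive to it while your phase computation is not; you should state this carefully rather than in passing. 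Second, the martingale property of $\mathfrak a_j^\alpha$, which you defer to a drift computation or to the submersion framework, is genuinely needed to read off $L$ and should be carried out (the paper absorbs it into the Stratonovich--It\=o equivalence asserted in Definition \ref{definition stochastic area}).
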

\begin{proof}
    The quadratic covariations between $\lambda_j$ and $\lambda_{\ell}$ follow directly from Theorem \ref{thm:generator-quaternionic-radial-process}.
    It remains to calculate the quadratic covariations between $\mathfrak{a}_j$ and $\lambda_{\ell}$, as well as between $\mathfrak{a}_j$ and $\mathfrak{a}_{\ell}$ for $1\leq j, \ell\leq n$. We decompose each quaternionic area forms as
    \[
    \mathfrak{a}_i = \mathfrak{a}_i^1 \mathbf{i} + \mathfrak{a}_i^2 \mathbf{j}+\mathfrak{a}_i^3 \mathbf{k}
    \]
    and aim to compute the quadratic covariations
    \[
    \left \langle \int_{w[0,\cdot]} \mathfrak{a}_i^a, \int_{w[0,\cdot]} \mathfrak{a}_j^b \right \rangle (t),\quad {a,b=1,2,3}
    \]
   Since $(w(t))_{t\geq 0}$ is a Brownian motion on $F_{1,2,\dots ,n-1}(\mathbb{H}^n)$, it satisfies the general fact that
    \[
    \left \langle \int_{w[0,\cdot]} \mathfrak{a}_i^a, \int_{w[0,\cdot]} \mathfrak{a}_j^b \right \rangle (t) =\int_0^t g_{F_{n}} (\mathfrak{a}_i^a ,\mathfrak{a}_j^b) (w(s))  ds,
    \]
    where $g_{F_n}$ is the Riemannian cometric on $F_{1,2,\dots n-1}(\mathbb{H}^n)$.
    Thus we are reduced to computing $g_{F_n} (\mathfrak{a}_i^a ,\mathfrak{a}_j^b)$.
    Recall the relation \eqref{connection form index form} between quaternionic area forms and  Maurer-Cartan forms:
   \begin{align*}
\eta_i (g)= \omega_i (g) +\mathrm{Ad}_{g^{-1}} ( \pi^* \mathfrak{a}_i(g)),
\end{align*}
 Since $\pi$ is a Riemannian submersion, $\mathrm{Ad}_{g^{-1}}$ is an isometry, and $\eta$ is purely vertical, we obtain
    \[
    g_{F_n} (\mathfrak{a}_i^a ,\mathfrak{a}_j^b) =g_{\mathrm{Sp}
    (n)} ((\omega_i^a)_\mathcal{H} ,(\omega_j^b)_\mathcal{H}) ,\quad a,b=1,2,3,\ 1\le i,j\le n.
    \]
    where 
    \[
    \omega_i=\omega_i^1 \mathbf{i} + \omega_i^2 \mathbf{j}+\omega_i^3 \mathbf{k}
    \]
    and $(\omega_i^a)_\mathcal{H}$ denotes the horizontal part of the one-form $\omega_i^a$, i.e. the orthogonal projection of $\omega_i^a$ onto the orthogonal complement of $\eta$.
    Recall that the vector fields
 \[
 X^{(a)}_{jk}=\sum_{\ell =1}^n ((q_{\ell j} \mathbf{e}_a) \cdot \partial_{q_{\ell k}}-(q_{\ell k} \overline{\mathbf{e}}_a) \cdot \partial_{q_{\ell j}}),\quad {a=0,1,2, 3,\ 1\le j<k\le n}
 \]   
     form an orthonormal frame of the horizontal bundle.    
    Therefore, for any $\alpha, \alpha'\in\{1,2,3\}$ and $1\le l,r\le n$,
    \[
    g_{\mathrm{Sp} (n)} ((\omega_l^\alpha)_\mathcal{H} ,(\omega_r^{\alpha'})_\mathcal{H})
    =\sum_{a=0}^3 \sum_{1\leq j<k\leq n} g_{\mathrm{Sp}
    (n)} (\omega_l^\alpha,\mathcal{X}^a_{jk})g_{\mathrm{Sp}
    (n)} (\omega_r^{\alpha'},\mathcal{X}^a_{jk}) ,
    \]
    where $\mathcal{X}^a_{jk}$ is the one-form dual of $X_{jk}^{(a)}$, i.e. $ \mathcal{X}^a_{jk}=g_{\mathrm{Sp}(n)}\left( X_{jk}^{(a)},\cdot \right).$ Here we used the expansion
    \begin{align*}
        (\omega_l^\alpha)_\mathcal{H} =\sum_{a =0}^3\sum_{1\leq j<k\leq n}g_{\mathrm{Sp}
    (n)} (\omega_l^\alpha,\mathcal{X}^a_{jk})\mathcal{X}^a_{jk} .
    \end{align*}
    
    We can now perform the computation in real coordinates, using the following  inner product notation: for $h=t +x \mathbf{i} +y \mathbf{j} +z \mathbf{k}$, we write
    \[
    h\cdot dq_{ij}=t dt_{ij}+x dx_{ij}+y dy_{ij} +z dz_{ij}.
    \]
    From \cite[Page 30]{Baudoin2024-is} we have  for any $1\le \alpha \le 3$, $1\le l \le n$ that 
    \[
    \omega_l^\alpha=\frac{1}{|q_{nl}|^2} ( q_{nl} \mathbf{e}_\alpha)\cdot dq_{nl}.
    \]
    Moreover, it is easy to compute that 
    \[
    \mathcal{X}^a_{jk}=\frac{1}{2}\sum_{i=1}^n (( q_{ij} \mathbf{e}_a)\cdot dq_{ik} -( q_{ik}\overline{\mathbf{e}}_a)\cdot dq_{ij}).
    \]
    Finally, recall that $\mathrm{Sp}(n)$ is an immersed submanifold of the Euclidean space $\mathbb{R}^{4 n^2}$, equipped with the inner product equal to $1/2$ of the standard Euclidean inner product.   Therefore, for any $1\leq l<r\leq n$, we compute
    \begin{align*}
    g_{\mathrm{Sp} (n)} ((\omega_l^\alpha)_\mathcal{H} ,(\omega_r^{\alpha'})_\mathcal{H})&=\sum_{a=0}^3 \sum_{1\leq j<k\leq n} g_{\mathrm{Sp}
    (n)} (\omega_l^\alpha,\mathcal{X}^a_{jk})g_{\mathrm{Sp}
    (n)} (\omega_r^{\alpha'},\mathcal{X}^a_{jk}) \\
    &=\sum_{a=0}^3 g_{\mathrm{Sp}
    (n)} (\omega_l^\alpha,\mathcal{X}^a_{lr}) g_{\mathrm{Sp}
    (n)} (\omega_r^{\alpha'},\mathcal{X}^a_{lr}) \\
    &=-\frac{1}{|q_{nl}|^2|q_{nr}|^2}\sum_{a=0}^3 \left( q_{nl}\mathbf{e}_\alpha \cdot q_{nl} \overline{\mathbf{e}}_a\right)\left( q_{nr}\mathbf{e}_{\alpha'} \cdot q_{nr} \mathbf{e}_a\right)  \\
    &=\frac{1}{|q_{nl}|^2|q_{nr}|^2}\sum_{a=1}^3  \left( q_{nl}\mathbf{e}_\alpha \cdot q_{nl} \mathbf{e}_a\right)\left( q_{nr}\mathbf{e}_{\alpha'} \cdot q_{nr} \mathbf{e}_a\right)
    =\delta_{\alpha \alpha'} .
    \end{align*}
    When $1\leq l=r\leq n$, we have
    \begin{align*}
    g_{\mathrm{Sp} (n)} ((\omega_l^\alpha)_\mathcal{H} ,(\omega_l^{\alpha'})_\mathcal{H})
    &=\sum_{a=0}^3 \sum_{1\leq j\neq l\leq n} g_{\mathrm{Sp} (n)} (\omega_l^\alpha,\mathcal{X}^a_{jl})g_{\mathrm{Sp} (n)} (\omega_l^{\alpha'},\mathcal{X}^a_{jl}) \\
    &=\frac{1}{|q_{nl}|^4}\sum_{a=0}^3 \sum_{1\leq j\neq l\leq n}( q_{nl}\mathbf{e}_\alpha)\cdot (q_{nj} \mathbf{e}_a)  (q_{nl}\mathbf{e}_{\alpha'})\cdot (q_{nj} \mathbf{e}_a) \\
    &=\frac{1}{|q_{nl}|^4} \sum_{1\leq j\neq l\leq n}( q_{nl}\mathbf{e}_\alpha)\cdot  (q_{nl}\mathbf{e}_{\alpha'}) | q_{nj}|^2 ,
    \end{align*}
    where in the equality we used that $ q_{nj}\mathbf{e}_a$, $a=0,1,2,3$  forms an orthogonal basis of $\mathbb R^4$. Therefore, if $1\leq \alpha \neq \alpha'\leq 3 $  we have
    \[
    g_{\mathrm{Sp}
    (n)} ((\omega_l^\alpha)_\mathcal{H} ,(\omega_l^{\alpha'})_\mathcal{H}) =0,
    \]
    whereas if $\alpha = \alpha'$ we have
    \[
    g_{\mathrm{Sp}
    (n)} ((\omega_l^\alpha)_\mathcal{H} ,(\omega_l^{\alpha'})_\mathcal{H})=\frac{1-|q_{nl}|^2}{|q_{nl}|^2}=\frac{1-\lambda_\ell}{\lambda_\ell}.
    \]
    In a similar way we prove that $\left\langle \lambda_i , \mathfrak{a}_j \right\rangle_t=0$. Indeed,
  \[
   \left\langle \lambda_i , \mathfrak{a}^a_j \right\rangle_t= \left \langle \int_{w[0,\cdot]} d\lambda_i, \int_{w[0,\cdot]} \mathfrak{a}_j^a \right \rangle (t) =\int_0^t g_{F_{n}} (d\lambda_i ,\mathfrak{a}_j^a) (w(s)) ds.
    \]
Now, observe that the one-form $d\lambda_i =d|q_{ni}|^2=2q_{ni}\cdot dq_{ni}$ is horizontal on $\mathrm{Sp}(n)$. Therefore, as before we deduce that
\[
g_{F_{n}} (d\lambda_i ,\mathfrak{a}2_j^a) =-2 g_{\mathrm{Sp}(n)}(q_{ni}\cdot dq_{ni}, \omega_j^a)=0 ,
\]
because
\[
    \omega_j^a=\frac{1}{|q_{nj}|^2} ( q_{nj} \mathbf{e}_a)\cdot dq_{nj}.
    \]
\end{proof}

\subsection{Characteristic function of the quaternionic stochastic areas}\label{sec: characteristic area}

We now turn to the computation of the characteristic function of the quaternionic stochastic areas. We will use the Yor's transform method \cite[Appendix A9]{Baudoin2024-is}. We first need to introduce some notations related to Jacobi operators on simplices, see  \cite[\textsection 2.2]{baudoin2025fullflag} for further details. Consider the simplex 
\begin{equation*}
 \Sigma_{n-1} :=    \{ \lambda \in \mathbb{R}^{n-1} \mid  \lambda_j \geq 0, \, 1 \leq j \leq n-1, \quad \lambda_1 + \dots + 
    \lambda_{n-1} \leq 1\}. 
\end{equation*}
The Jacobi operator in $\Sigma_{n-1}$ is  defined by
\begin{equation}\label{JacSim}
\mathcal{G}_\kappa :=\sum_{j=1}^{n-1} \lambda_j(1-\lambda_j) \frac{\partial^2}{\partial \lambda_{j}^2} 
 + \sum_{j=1}^{n-1}\left[\left(\kappa_j+\frac{1}{2}\right) 
 - \left(|\kappa| +\frac{n}{2}\right)\lambda_j\right] \frac{\partial}{\partial \lambda_{j}} 
 -\sum_{1 \leq j \neq \ell \leq n-1}\lambda_j\lambda_{\ell}  \frac{\partial^2}{\partial \lambda_{j}\lambda_{\ell}} .   
 \end{equation}
  A diffusion with generator $\mathcal{G}_\kappa$ is called a Jacobi diffusion in the simplex $\Sigma_{n-1}$.  Here, $\kappa = (\kappa_1, \dots, \kappa_n)$ is a parameter set such that $\kappa_j > -1/2$ for any $1 \leq j \leq n$ and  $|\kappa| := \kappa_1 + \dots + \kappa_n$. The operator $\mathcal{G}_\kappa$ is symmetric with respect to the Dirichlet measure on $\Sigma_{n-1}$ whose density is given by: 
\begin{multline}\label{DenDirich}
W^{(\kappa)}(\lambda_1, \dots, \lambda_{n-1}) := \frac{\Gamma(|\kappa|+(n/2))}{\prod_{j=1}^n\Gamma(\kappa_j+(1/2))}  \lambda_1^{\kappa_1-(1/2)}\cdots
\lambda_{n-1}^{\kappa_{n-1}-(1/2)} 
\\ (1-\lambda_1-\cdots-\lambda_{n-1})^{\kappa_n-(1/2)}. 
\end{multline}
The spectrum of $\mathcal{G}_\kappa$ is discrete and is given by 
\begin{equation}\label{eq-eigenv-prelim}
-j\left( j+|\kappa| +\frac{n-2}{2}\right), \quad j \geq 0.
\end{equation}
The corresponding set of orthonormal eigenfunctions consists of the so-called Jacobi polynomials on the simplex, which we will denote by $P_{\tau}^{(\kappa)}$ for $\tau\in\mathbb{N}^{n-1}$.
We refer the reader to the monograph \cite{MR3289583}, 
and the papers \cite{aktacs2013sobolev} and \cite{MR2817619} for further details about these polynomials. The density with respect to the Dirichlet measure $W^{(\kappa)}$ of the heat semi-group $e^{t\mathcal{G}_\kappa}$ reads 

\begin{equation}\label{kernel jacobi simplex}
q^{(\kappa_1,\dots ,\kappa_{n-1},\kappa_n)}_t(x,y) =\sum_{\tau\in\mathbb{N}^{n-1}} 
e^{-|\tau |(|\tau |+|\kappa |+(n-2)/2)t} P_{\tau}^{(\kappa)}(x)P_{\tau}^{(\kappa)}(y).  
\end{equation}

We can lift Jacobi diffusions in $\Sigma_{n-1}$ to diffusions in the $n-1$ simplex of $\mathbb{R}^n$.  More precisely, define
\begin{align}\label{eq-simplex}
    \mathcal{T}_n :=\{\lambda\in\mathbb{R}^{n}\mid\lambda_j\geq 0, \, 1\leq j\leq n,\, \lambda_1 +\dots +\lambda_n =1\}.
\end{align}
It is easy to check that if $(\lambda_1(t),\dots,\lambda_{n-1}(t))_{t\geq 0}$ is a diffusion with generator $\mathcal{G}_\kappa$, then the process $(\lambda_1(t),\dots,\lambda_{n}(t))_{t\geq 0}$, where $\lambda_n(t) :=1-\sum_{k=1}^{n-1} \lambda_k(t)$, is a diffusion in $\mathcal{T}_n$ with generator
\begin{equation}\label{GenJacSim1}
\widehat{\mathcal{G}}_\kappa:=\sum_{j=1}^{n} \lambda_j(1-\lambda_j) \frac{\partial^2}{\partial \lambda_{j}^2} 
 + \sum_{j=1}^{n}\left[\left(\kappa_j+\frac{1}{2}\right) 
 - \left(|\kappa| +\frac{n}{2}\right)\lambda_j\right] \frac{\partial}{\partial \lambda_{j}} 
 -\sum_{1 \leq j \neq \ell \leq n}\lambda_j\lambda_{\ell}  \frac{\partial^2}{\partial \lambda_{j}\partial \lambda_{\ell}} . 
 \end{equation}

The operator $\widehat{\mathcal{G}}_\kappa$ is called the lift of $\mathcal{G}_\kappa$ to $\mathcal{T}_n$ and a diffusion with generator $\widehat{\mathcal{G}}_\kappa$ is called a Jacobi diffusion in $ \mathcal{T}_n$.

\begin{theorem}\label{thm:characteristic-function-quaternionic-area}
    Let $(\lambda (t),\mathfrak{a}(t))_{t\geq 0}$ be the diffusion process from Theorem \ref{thm:diffusion-quaternionic-area}.
    For any $u =(u_1^1,u_1^2,u_1^3,\dots ,u_n^1,u_n^2,u_n^3)\in\mathbb{R}^{3n}$, any $\lambda (0),\lambda$ in the interior of $\mathcal{T}_{n}$ and any $t>0$ we have
    \begin{align*}
        \mathbb{E}\bigg( &e^{i\sum_{j=1}^n\sum_{a=1}^3 u_j^a\mathfrak{a}^a_j(t)}\biggm|\lambda (t)=\lambda\bigg)
        =e^{-(2n-2)\sum_{j=1}^n\mu_j t -\frac{1}{2}\sum_{1\leq j\neq\ell\leq n}(\sum_{a=1}^3u_j^au_{\ell}^a +\mu_j\mu_{\ell})t}\\
        &\qquad\prod_{j=1}^n\left(\frac{\lambda_j (0)}{\lambda_j}\right)^{\frac{\mu_j}{2}}\frac{q^{(3/2+\mu_1 ,\dots ,3/2+\mu_{n})}_{2t} (\lambda^{(n-1)}(0),\lambda^{(n-1)})}{q^{(3/2 ,\dots ,3/2)}_{2t} (\lambda^{(n-1)}(0),\lambda^{(n-1)})}\frac{W^{(3/2 +\mu_1,\dots ,3/2 +\mu_n)}(\lambda^{(n-1)})}{W^{(3/2,\dots ,3/2)}(\lambda^{(n-1)})} ,
    \end{align*}
    where $q_t^{(\kappa_1 ,\dots ,\kappa_n )}$ is the heat kernel of a Jacobi process on the simplex of index $(\kappa_1,\dots ,\kappa_n)$ and $W^{(\kappa_1,\dots ,\kappa_1)}$ its invariant measure density. 
    Furthermore $\mu_j :=\sqrt{|u_j^1|^2+|u_j^2|^2 +|u_j^3|^2 +1}-1$ and $\lambda^{(n-1)}:= (\lambda_1,\dots ,\lambda_{n-1})$.
\end{theorem}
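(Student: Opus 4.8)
The plan is to first integrate out the area variables conditionally on the radial path, reducing the characteristic function to a Feynman--Kac functional of the Jacobi diffusion $(\lambda(t))_{t\ge 0}$, and then to evaluate that functional through a ground-state (Doob) transform which realises Yor's exponential method \cite[Appendix A9]{Baudoin2024-is} on the simplex. First I would condition on the whole trajectory $(\lambda(s))_{0\le s\le t}$. By Theorem \ref{thm:diffusion-quaternionic-area} the components $\mathfrak a_j^\alpha$ carry no drift, their mutual brackets depend only on $\lambda$, and crucially $\langle \lambda_i,\mathfrak a_j^\alpha\rangle\equiv 0$; hence, conditionally on the $\lambda$-path and in the absence of drift, $\mathfrak a(t)$ is a centred Gaussian vector. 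Its conditional characteristic function is therefore $\exp\!\big(-\tfrac12\int_0^t Q_u(\lambda(s))\,ds\big)$, where the quadratic form
\[
Q_u(\lambda)=\sum_{j=1}^n |u_j|^2\Big(\tfrac1{\lambda_j}-1\Big)+\sum_{1\le j\ne\ell\le n}\ \sum_{\alpha=1}^3 u_j^\alpha u_\ell^\alpha
\]
is read off the second-order part of $L$, with $|u_j|^2:=\sum_\alpha (u_j^\alpha)^2=\mu_j(\mu_j+2)$. Conditioning further only on $\lambda(t)=\lambda$ and using the tower property gives
\[
\mathbb E\Big(e^{\,i\sum_{j,a}u_j^a\mathfrak a_j^a(t)}\ \Big|\ \lambda(t)=\lambda\Big)=\mathbb E\Big(\exp\big(-\tfrac12\textstyle\int_0^t Q_u(\lambda(s))\,ds\big)\ \Big|\ \lambda(t)=\lambda\Big).
\]

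Next I would split the potential $V:=\tfrac12 Q_u$ into its singular part $V_{\mathrm{sing}}(\lambda)=\tfrac12\sum_j \mu_j(\mu_j+2)\lambda_j^{-1}$ and a constant $c$; the constant merely contributes a deterministic factor $e^{-ct}$. To absorb $V_{\mathrm{sing}}$ I would apply the ground-state transform $h(\lambda)=\prod_{j=1}^n \lambda_j^{\mu_j/2}$. Recalling from Theorem \ref{thm:generator-quaternionic-radial-process} that $(\lambda(t))$ has generator $2\widehat{\mathcal G}_\kappa$ with $\kappa=(3/2,\dots,3/2)$, a direct conjugation computation shows
\[
h^{-1}\big(2\widehat{\mathcal G}_\kappa-V_{\mathrm{sing}}\big)h=2\widehat{\mathcal G}_{\kappa'}+C_h,
\]
where $\kappa'=(3/2+\mu_1,\dots,3/2+\mu_n)$ is the shifted index and $C_h$ is an explicit constant. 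The exponent $m_j=\mu_j/2$ is forced precisely because it is the positive solution of $4m_j(m_j+1)=\mu_j(\mu_j+2)$, which is exactly the relation that cancels every inverse-power singularity; this is the quaternionic incarnation of Yor's transform.

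Finally I would convert the semigroup identity $e^{t(2\widehat{\mathcal G}_\kappa-V)}=e^{-ct}e^{C_h t}\,h\,e^{2t\widehat{\mathcal G}_{\kappa'}}\,h^{-1}$ into kernels with respect to the Dirichlet measures $W^{(\kappa)}$ and $W^{(\kappa')}$, and write the conditional expectation as the ratio of the resulting Feynman--Kac kernel to the transition kernel $q_{2t}^{(3/2,\dots,3/2)}$ of $(\lambda(t))$. The multiplicative prefactor $h(\lambda(0))/h(\lambda)$ produces $\prod_j(\lambda_j(0)/\lambda_j)^{\mu_j/2}$, the shifted semigroup produces the ratio $q_{2t}^{(3/2+\mu)}/q_{2t}^{(3/2)}$, and the change of reference density produces $W^{(3/2+\mu)}/W^{(3/2)}$; the time $2t$ reflects the speed-$2$ normalisation $2\widehat{\mathcal G}_\kappa$ of the radial process. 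Collecting the surviving scalar $-c+C_h$ then yields exactly the deterministic exponent $-(2n-2)\sum_j\mu_j t-\tfrac12\sum_{j\ne\ell}\big(\sum_a u_j^a u_\ell^a+\mu_j\mu_\ell\big)t$ of the statement.

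The step I expect to be the main obstacle is the conjugation: because of the off-diagonal terms $\lambda_j\lambda_\ell\,\partial_{\lambda_j}\partial_{\lambda_\ell}$ in $\widehat{\mathcal G}_\kappa$, one must verify that conjugation by $h$ leaves no residual first-order or inverse-power terms beyond those of a genuine Jacobi operator of index $\kappa'$, and one must bookkeep all constants contributing to $C_h$ with care (the cross-terms $\mu_j\mu_\ell$ in the final exponent arise here, not from the Gaussian covariance). Subsidiary care is needed to justify the conditional Gaussianity of $\mathfrak a$ rigorously from the orthogonality $\langle\lambda,\mathfrak a\rangle=0$, and to control the integrability of the singular functional $\int_0^t V_{\mathrm{sing}}(\lambda(s))\,ds$ near the boundary $\partial\mathcal T_n$.
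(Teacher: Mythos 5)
Your proposal is correct and follows essentially the same route as the paper: condition on the radial path to reduce to a Feynman--Kac functional of the Jacobi diffusion, then apply Yor's transform with the multiplicative functional $\prod_j\lambda_j^{\mu_j/2}$ and $\mu_j=\sqrt{1+|u_j|^2}-1$, identifying the transformed dynamics as the Jacobi process of shifted index $(3/2+\mu_1,\dots,3/2+\mu_n)$. The only (cosmetic) difference is that you identify the transformed process by conjugating the generator, whereas the paper verifies that the exponential functional is a bounded martingale via It\=o's formula and then applies Girsanov to the SDE of $(\lambda(t))_{t\ge 0}$ --- your worry about residual terms in the conjugation is resolved there by the explicit drift computation, and your observation that the $\mu_j\mu_\ell$ cross-terms come from the eigenvalue constant rather than the Gaussian covariance matches the paper exactly.
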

\begin{proof}
    From Theorem \ref{thm:diffusion-quaternionic-area} we know that, conditioned on $(\lambda (s),s\leq t)$, the quaternionic stochastic area process $(\mathfrak{a}(t))_{t\geq 0}$ is a Gaussian variable with mean zero and covariance matrix
    \begin{align*}
        \Sigma (t):=\begin{pmatrix}
            \int_0^t\frac{1-\lambda_1 (s)}{\lambda_1 (s)}ds & \dots & t\\
            \vdots & \ddots & \vdots \\
            t & \dots & \int_0^t\frac{1-\lambda_n (s)}{\lambda_n (s)}ds
        \end{pmatrix}\otimes I_3 ,
    \end{align*}
    where $\otimes$ stands for the Kronecker product of matrices.
    It follows that
    \begin{align*}
        \mathbb{E}\bigg( &e^{i\sum_{j=1}^n\sum_{a=1}^3 u_j^{a}\mathfrak{a}_j^a (t)}\mid\lambda (t)=\lambda\bigg)
        =\mathbb{E}(e^{-\frac{1}{2}u^T\Sigma (t) u}\mid\lambda (t)=\lambda)\\
        &=\mathbb{E}\left(\exp\left( -\frac{1}{2}\sum_{j=1}^n\sum_{a=1}^3(u_j^{a})^2\int_0^t\frac{1-\lambda_j (s)}{\lambda_j (s)}ds -\frac{1}{2}\sum_{1\leq j\neq\ell\leq n}\sum_{a=1}^3u_j^au_{\ell}^a t\right)\bigg|\lambda (t)=\lambda\right) .
    \end{align*}
    
    It only remains to derive an expression for
    \begin{align*}
        \mathbb{E}\left(\exp\left( -\frac{1}{2}\sum_{j=1}^n\sum_{a=1}^3(u_j^{a})^2\int_0^t\frac{1-\lambda_j (s)}{\lambda_j (s)}ds\right)\bigg|\lambda (t)=\lambda\right) .
    \end{align*}
    Define the following function on $\mathcal{T}_{n}$
    \begin{align*}
        f(\lambda_1 ,\dots ,\lambda_n ):=\prod_{j=1}^n\lambda_j^{\frac{\mu_j}{2}}\textrm{ for some }\mu_j >0 .
    \end{align*}
    Let $\hat{\mathcal{G}}_{3/2,\dots ,3/2}$ be a Jacobi operator on $\mathcal{T}_n$ of index $(3/2,\dots ,3/2)$.
    By applying $\hat{\mathcal{G}}_{3/2,\dots ,3/2}$ to $f$ we obtain
    \begin{align*}
        \hat{\mathcal{G}}_{3/2,\dots ,3/2} f
        &=\sum_{j=1}^n\lambda_j(1-\lambda_j)\frac{\partial^2 f}{\partial\lambda_j^2} +\sum_{j=1}^n (2-2n\lambda_j)\frac{\partial f}{\partial\lambda_j} -\sum_{1\leq j\neq\ell\leq n}\lambda_j\lambda_{\ell}\frac{\partial^2 f}{\partial\lambda_j\partial\lambda_{\ell}}\\
        &=\sum_{j=1}^n\frac{\mu_j}{2}\left(\frac{\mu_j}{2} -1\right)\frac{1-\lambda_j}{\lambda_j}f +\sum_{j=1}^n\mu_j\left(\frac{1}{\lambda_j} -n\right) f-\sum_{1\leq j\neq\ell\leq n}\frac{\mu_j\mu_{\ell}}{4}f ,
    \end{align*}
    which implies that $f$ is an eigenfunction of the operator
    \begin{align*}
        \hat{\mathcal{G}}_{3/2,\dots ,3/2} -\frac{1}{2}\sum_{j=1}^n\left(\frac{\mu_j^2}{2} +\mu_j\right)\frac{1-\lambda_j}{\lambda_j} 
    \end{align*}
    associated with the eigenvalue
    \begin{align*}
        -\left( n-1\right)\sum_{j=1}^n\mu_j -\frac{1}{4}\sum_{1\leq j\neq\ell\leq n}\mu_j\mu_{\ell} .
    \end{align*}
    We choose $\mu_j =\sqrt{1+|u_j^1|^2+|u_j^2|^2 +|u_j^3|^2} -1\geq 0$, since in this case $f$ is an eigenfunction of the operator
    \begin{align*}
        \hat{\mathcal{G}}_{3/2,\dots ,3/2} -\frac{1}{4}\sum_{j=1}^n\sum_{a=1}^3 (u_j^a)^2\frac{1-\lambda_j}{\lambda_j} . 
    \end{align*}
    
    Now we recall that $(\lambda (t))_{t\geq 0}$ is a Jacobi process in the simplex $\mathcal{T}_n$ with generator $2\hat{\mathcal{G}}_{3/2,\dots ,3/2}$.
    Itō's formula then shows that the process
    \begin{align*}
        D_t^u :=e^{\left( 2n-2\right)\sum_{j=1}^n\mu_jt +\frac{1}{2}\sum_{1\leq j\neq\ell\leq n}\mu_j\mu_{\ell} t}\left(\prod_{j=1}^n\left(\frac{\lambda_j (t)}{\lambda_j (0)}\right)^{\frac{\mu_j}{2}} e^{-\sum_{a=1}^3\frac{1}{2}(u_j^a)^2\int_0^t\frac{1-\lambda_j (s)}{\lambda_j (s)}ds}\right)
    \end{align*}
    is a local martingale.
    One can easily verify that
    \begin{align*}
        D_t^u\leq\frac{e^{\left( 2n-2\right)\sum_{j=1}^n\mu_jt +\frac{1}{2}\sum_{1\leq j\neq\ell\leq n}\mu_j\mu_{\ell} t}}{\prod_{j=1}^n\lambda_j (0)^{\frac{\mu_j}{2}}} ,
    \end{align*}
    which implies that $(D_t^{u})_{t\geq 0}$ is in fact a martingale.
    We can therefore define a new probability measure $\mathbb{P}^u$ by setting $d\mathbb{P}^u :=D_t^ud\mathbb{P}$.
    We then have for every bounded Borel function $F$
    \begin{align*}
        \mathbb{E}\bigg( &F(\lambda_1 (t),\dots ,\lambda_n (t))e^{-\frac{1}{2}\sum_{j=1}^n\sum_{a=1}^3(u_j^a)^2\int_0^t\frac{1-\lambda_j (s)}{\lambda_j (s)}ds}\bigg) \\
        &\qquad =e^{-(2n-2)\sum_{j=1}^n\mu_j t -\frac{1}{2}\sum_{1\leq j\neq\ell\leq n}\mu_j\mu_{\ell} t}\prod_{j=1}^n\lambda_j(0)^{\frac{\mu_j}{2}}\mathbb{E}^u\left(\frac{F(\lambda_1 (t),\dots ,\lambda_n (t))}{\prod_{j=1}^n\lambda_j (t)^{\frac{|u_j|}{2}}}\right) .
    \end{align*}

    Let $s_t^{(u)}(\lambda (0),d\lambda )$ denote the probability distribution of $\lambda (t)$ under $\mathbb{P}^u$ and $\hat{q}_{2t}^{(3/2,\dots ,3/2)}(\lambda (0),d\lambda )$ the probability distribution of $\lambda (t)$ under $\mathbb{P}$.
    The above equality then implies
    \begin{align*}
        \mathbb{E}\bigg( &e^{-\frac{1}{2}\sum_{j=1}^n\sum_{a=1}^3(u_j^a)^2\int_0^t\frac{1-\lambda_j (s)}{\lambda_j (s)}ds}\biggm|\lambda (t)=\lambda\bigg)\hat{q}_{2t}^{(3/2,\dots ,3/2)}(\lambda (0),d\lambda ) \\
        &\qquad =e^{-(2n-2)\sum_{j=1}^n\mu_j t-\frac{1}{2}\sum_{1\leq j\neq\ell\leq n}\mu_j\mu_{\ell} t}\prod_{j=1}^n\left(\frac{\lambda_j(0)}{\lambda_j}\right)^{\frac{\mu_j}{2}} s_t^{(u)}(\lambda (0),d\lambda ) .
    \end{align*}

    We can compute $s_t^{(u)}(\lambda (0),d\lambda )$ using Girsanov's theorem.
    Note that since $(\lambda (t))_{t\geq 0}$ is a Jacobi process on the simplex of index $(3/2,\dots ,3/2)$ it satisfies the stochastic differential equations
    \begin{align*}
        d\lambda_j = 
        2\sum_{\ell =1; \ell\neq j}^n\sqrt{\lambda_{\ell}\lambda_j}d\gamma_{\ell j} +2\left( 2-2n\lambda_j\right) dt,
    \end{align*}
    where $(\gamma_{\ell j}(t))_{\ell <j}$ is a Brownian motion on $\mathbb{R}^{\frac{1}{2}n(n-1)}$ and $\gamma_{\ell j}:=-\gamma_{j\ell}$ for $1\leq j<\ell\leq n$.
    By Itō's formula
    \begin{align*}
        d\ln (\lambda_j(t)) &=\frac{d\lambda_j (t)}{\lambda_j (t)}
        -\frac{1}{2}\frac{d\lambda_j (t)d\lambda_j (t)}{\lambda_j(t)^2}\\
        &=\frac{2}{\sqrt{\lambda_j (t)}}\sum_{\ell =1;\ell\neq j}^nd\gamma_{\ell j}(t)\sqrt{\lambda_{\ell} (t)}
        +2\left(\frac{2}{\lambda_j(t)} -2n\right) dt
        -\frac{2}{\lambda_j (t)}\sum_{\ell =1;\ell\neq j}^n\lambda_{\ell} (t)dt ,
    \end{align*}
    which, using $1-\lambda_j (t)=\sum_{\ell =1;\ell\neq j}^n\lambda_{\ell} (t)$, simplifies to 
    \begin{align*}
        d\ln (\lambda_j (t))=\frac{2}{\sqrt{\lambda_j (t)}}\sum_{\ell =1;\ell\neq j}^nd\gamma_{\ell j}(t)\sqrt{\lambda_{\ell} (t)} +\frac{2dt}{\lambda_j(t)}-2(2n-1)dt .
    \end{align*}
    This gives
    \begin{align*}
        \prod_{j=1}^n\lambda_j (t)^{\frac{\mu_j}{2}}=&\exp\left(\sum_{1\leq j\neq\ell\leq n}\mu_j\int_0^t\sqrt{\frac{\lambda_{\ell} (s)}{\lambda_j (s)}}d\gamma_{\ell j}(s)\right) \\
        &\exp\left( \sum_{j=1}^n\int_0^t\mu_j\frac{ds}{\lambda_j (s)}-(2n-1)\sum_{k=1}^n\mu_j t\right) .
    \end{align*}
    Define the stochastic processes $(\tilde{\gamma}_{\ell j}(t))_{t\geq 0}$ by
    \begin{align*}
        d\tilde{\gamma}_{\ell j}(t) :=d\gamma_{\ell j}(t) -\Theta_{\ell j}(t)dt
    \end{align*}
    and set $\tilde{\gamma}_{j\ell} (t):=-\tilde{\gamma}_{\ell j}(t)$, where
    \begin{align*}
        \Theta_{\ell j}(t):=\frac{\mu_j}{\sqrt{\lambda_j (t)}}\sqrt{\lambda_{\ell} (t)} -\frac{\mu_{\ell}}{\sqrt{\lambda_{\ell} (t)}}\sqrt{\lambda_j (t)} .
    \end{align*}
    By Girsanov's theorem the process $(\tilde{\gamma}_{\ell j}(t))_{1\leq\ell <j\leq n}$ is a Brownian motion on $\mathbb{R}^{\frac{1}{2}n(n-1)}$ under $\mathbb{P}^u$.
    This means that under $\mathbb{P}^u$, the process $(\lambda (t))_{t\geq 0}$ satisfies the stochastic differential equations
    \begin{align*}
        d\lambda_j (t) =&2\sqrt{\lambda_j (t)}\sum_{\ell =1;\ell\neq j}^n d\tilde{\gamma}_{\ell j}(t)\sqrt{\lambda_{\ell} (t)} +2(2-2n\lambda_j (t))dt\\
        &+2\sum_{1\leq\ell <j}(\mu_j\lambda_{\ell} (t) -\mu_{\ell}\lambda_j (t))dt
        -2\sum_{j<\ell\leq n}(\mu_{\ell}\lambda_j (t) -\mu_j\lambda_{\ell} (t))dt \\
        =&2\sqrt{\lambda_j (t)}\sum_{\ell =1;\ell\neq j}^nd\tilde{\gamma}_{\ell j}\sqrt{\lambda_{\ell} (t)} +2\left( 2-2n\lambda_j (t) -\sum_{k=1}^n\mu_k\lambda_j (t) +\mu_j\right) dt,
    \end{align*}
    where we used that $\sum_{\ell =1}^n\lambda_{\ell}(t) =1$.
    In particular, the generator of $(\lambda (t))_{t\geq 0}$ under $\mathbb{P}^u$ is given by a Jacobi operator on the simplex $\mathcal{T}_n$ of index $(3/2 +\mu_1,\dots ,3/2 +\mu_n)$.
\end{proof}

\subsection{Limit theorem for the quaternionic stochastic areas}

We are now ready to prove a central limit type theorem for the asymptotics of the quaternionic stochastic area.

\begin{theorem}\label{thm:limit-quaternionic-stochastic-area}
    Let $(w(t),\mathfrak{a}(t))_{t\geq 0}$ be the diffusion process from Theorem \ref{thm:diffusion-quaternionic-area}.
    The following convergence holds in distribution
    \begin{align*}
        \frac{\mathfrak{a}(t)}{\sqrt{t}}\rightarrow\mathcal{N}_{3n}(0,\Sigma )\textrm{ as }t\rightarrow\infty ,
    \end{align*}
    where $\mathcal{N}_{3n}(0,\Sigma )$ is a $3n$-dimensional multivariate normal distribution of mean $0$ and covariance $\Sigma$ with
    \begin{align}\label{eq:covariance-limit}
        \Sigma :=\begin{pmatrix}
            2n-2 & 1 &\dots & 1\\
            1 & 2n-2 & \dots & 1\\
            \vdots &\vdots & \ddots & \vdots\\
            1 & 1 & \dots & 2n-2
        \end{pmatrix}\otimes I_3.
    \end{align}
\end{theorem}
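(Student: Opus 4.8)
The plan is to compute the characteristic function of the rescaled area vector $\mathfrak{a}(t)/\sqrt t$ directly from the explicit formula of Theorem \ref{thm:characteristic-function-quaternionic-area}, and to show that it converges, as $t\to\infty$, to the characteristic function of $\mathcal{N}_{3n}(0,\Sigma)$; the conclusion then follows from L\'evy's continuity theorem. Concretely, for a fixed $u\in\mathbb{R}^{3n}$ I would evaluate the conditional characteristic function of Theorem \ref{thm:characteristic-function-quaternionic-area} at the rescaled argument $u/\sqrt t$ and integrate it against the law of $\lambda(t)$, which is $q_{2t}^{(3/2,\dots,3/2)}(\lambda(0),\lambda)\,W^{(3/2,\dots,3/2)}(\lambda)\,d\lambda$. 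The key algebraic simplification is that the factors $q_{2t}^{(3/2,\dots,3/2)}$ and $W^{(3/2,\dots,3/2)}$ appearing in the denominator of the conditional characteristic function cancel exactly against the density of the law of $\lambda(t)$, leaving
\[
\mathbb{E}\Big(e^{\,i\sum_{j,a} u_j^a \mathfrak{a}_j^a(t)/\sqrt t}\Big)=P(t)\,\prod_{j=1}^n\lambda_j(0)^{\mu_j/2}\int_{\mathcal{T}_n}\prod_{j=1}^n\lambda_j^{-\mu_j/2}\,q_{2t}^{(3/2+\mu)}(\lambda(0),\lambda)\,W^{(3/2+\mu)}(\lambda)\,d\lambda,
\]
where $P(t):=\exp\!\big(-(2n-2)\sum_j\mu_j t-\tfrac12\sum_{j\neq\ell}(\tfrac1t\sum_a u_j^au_\ell^a+\mu_j\mu_\ell)t\big)$ is the exponential prefactor and now $\mu_j=\sqrt{1+|u_j|^2/t}-1$ with $|u_j|^2=\sum_{a=1}^3(u_j^a)^2$.

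Next I would carry out the asymptotics term by term. Since $\mu_j=\sqrt{1+|u_j|^2/t}-1\sim |u_j|^2/(2t)\to 0$, we have $\mu_j t\to |u_j|^2/2$ and $\mu_j\mu_\ell\, t\to 0$, while $\tfrac1t\sum_a u_j^au_\ell^a\cdot t=\sum_a u_j^a u_\ell^a$, so the prefactor satisfies
\[
P(t)\longrightarrow \exp\!\Big(-(n-1)\sum_{j=1}^n|u_j|^2-\tfrac12\sum_{1\le j\neq\ell\le n}\sum_{a=1}^3 u_j^a u_\ell^a\Big).
\]
A direct computation with $\Sigma=M\otimes I_3$, $M_{jj}=2n-2$, $M_{j\ell}=1$ for $j\neq\ell$, gives $\tfrac12 u^{T}\Sigma u=(n-1)\sum_j|u_j|^2+\tfrac12\sum_{j\neq\ell}\sum_a u_j^a u_\ell^a$, so the displayed limit is exactly $\exp(-\tfrac12 u^{T}\Sigma u)$, the characteristic function of $\mathcal{N}_{3n}(0,\Sigma)$ with $\Sigma$ as in \eqref{eq:covariance-limit}. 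It therefore remains only to show that the remaining integral factor tends to $1$.

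For the integral factor I would use two facts. First, $\prod_j\lambda_j(0)^{\mu_j/2}\to 1$ since $\mu_j\to 0$. Second, the integral equals $\big(e^{2t\widehat{\mathcal{G}}_{3/2+\mu}}\phi_\mu\big)(\lambda(0))$ with $\phi_\mu(\lambda)=\prod_j\lambda_j^{-\mu_j/2}$, so convergence to equilibrium of the Jacobi semigroup on $\mathcal{T}_n$ forces it towards $\int_{\mathcal{T}_n}\phi_\mu\,dW^{(3/2+\mu)}$, which in turn tends to $\int_{\mathcal{T}_n}dW^{(3/2)}=1$ as $\mu\to 0$. Equivalently, one may pass to the limit inside the integral by dominated convergence: the spectral expansion \eqref{kernel jacobi simplex} has constant leading term $1$ and an exponentially decaying remainder, so $q_{2t}^{(3/2+\mu)}(\lambda(0),\cdot)\to 1$, while $\prod_j\lambda_j^{-\mu_j/2}\,W^{(3/2+\mu)}(\lambda)=c_\mu\prod_j\lambda_j^{1+\mu_j/2}\le c_\mu\prod_j\lambda_j$ supplies a majorant integrable on $\mathcal{T}_n$ and uniform for small $\mu$.

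The main obstacle is precisely this last step: the reference measure $W^{(3/2+\mu)}$, the integrand $\phi_\mu$, and the kernel $q_{2t}^{(3/2+\mu)}$ all depend on $t$ simultaneously through $\mu=\mu(t)$ and through $t$ itself, so the limit is a genuinely joint one rather than an iterated one. The clean way to control it is to observe that the spectral gap of $\widehat{\mathcal{G}}_{3/2+\mu}$ in \eqref{GenJacSim1} is bounded below uniformly for $\mu$ in a neighbourhood of $0$ (by the eigenvalue formula \eqref{eq-eigenv-prelim}), which makes the convergence-to-equilibrium estimate uniform in $\mu$ and allows $t\to\infty$ to be combined with $\mu(t)\to 0$; the boundary integrability above then secures the interchange of limit and integral. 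Once the integral factor is shown to tend to $1$, the pointwise limit of the characteristic function of $\mathfrak{a}(t)/\sqrt t$ is $\exp(-\tfrac12 u^{T}\Sigma u)$, and L\'evy's continuity theorem yields the claimed convergence in distribution.
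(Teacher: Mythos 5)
Your proposal is correct and follows essentially the same route as the first of the paper's two proofs: evaluating the explicit characteristic function of Theorem \ref{thm:characteristic-function-quaternionic-area} at the rescaled argument $u/\sqrt{t}$, noting the cancellation of the index-$(3/2,\dots,3/2)$ kernel and weight against the law of $\lambda(t)$, extracting the limit of the exponential prefactor from $\mu_j t\to |u_j|^2/2$, and sending the remaining Jacobi integral factor to $1$ --- a step the paper dispatches in one line by ``dominated convergence and formula \eqref{kernel jacobi simplex}'' and which you justify more carefully via the uniform spectral gap and the boundary-integrable majorant. The paper also records a second, softer proof via the martingale central limit theorem combined with Birkhoff's ergodic theorem for the one-dimensional Jacobi diffusions $\lambda_j$, which bypasses the explicit characteristic function entirely, but your argument as written is a complete and faithful version of the first proof.
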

\begin{proof}

We give two proofs for this result. The first one is based on the explicit formula for the characteristic function of $\mathfrak{a}$. Let $u_1^1,u_1^2,u_1^3,\dots ,u_n^1,u_n^2,u_n^3\in\mathbb{R}$.
    From Theorem \ref{thm:characteristic-function-quaternionic-area} one has
    \begin{align*}
        \mathbb{E}&\bigg( e^{i\sum_{j=1}^n\sum_{a=1}^3 u_j^a\mathfrak{a}^a_j(t)}\bigg)
        =e^{-(2n-2)\sum_{j=1}^n\mu_j^at -\frac{1}{2}\sum_{1\leq j\neq\ell\leq n}(\sum_{a=1}^3u_j^au_{\ell}^a +\mu_j\mu_{\ell})t}\\
        &\int_{\mathcal{T}_n}\prod_{j=1}^n\left(\frac{\lambda_j (0)}{\lambda_j}\right)^{\frac{\mu_j}{2}}\frac{q^{(3/2+\mu_1 ,\dots ,3/2+\mu_{n})}_{2t} (\lambda^{(n-1)}(0),\lambda^{(n-1)})}{q^{(3/2 ,\dots ,3/2)}_{2t} (\lambda^{(n-1)}(0),\lambda^{(n-1)})}\frac{W^{(3/2 +\mu_1,\dots ,3/2 +\mu_n)}(\lambda^{(n-1)})}{W^{3/2,\dots ,3/2}(\lambda^{(n-1)})} d\mathbb{P}_{\lambda (t)}(\lambda) ,
    \end{align*}
    where $\mathbb{P}_{\lambda (t)}$ is the law of $\lambda (t)$.
    It then follows by dominated convergence and formula \eqref{kernel jacobi simplex} that
    \begin{align*}
        \lim_{t\rightarrow\infty}\mathbb{E}\left( e^{i\sum_{j=1}^n\sum_{a=1}^3u_j^a\frac{\mathfrak{a}_j(t)}{\sqrt{t}}}\right) =e^{-(n-1)\sum_{j=1}^n\sum_{a=1}^3|u_j^a|^2 -\frac{1}{2}\sum_{1\leq j\neq\ell\leq n}\sum_{a=1}^3u_j^au_{\ell}^a } ,
    \end{align*}
    since
    \begin{align*}
        \lim_{t\rightarrow\infty}\sqrt{t(|u_j^1|^2+|u_j^2|^2+|u_j^3|^2) +t^2}-t =\frac{1}{2}(|u_j^1|^2+|u_j^2|^2+|u_j^3|^2).
    \end{align*}
    
    For the second proof we appeal to the classical martingale central limit theorem, see for instance \cite{MR1792301}. Indeed, from Theorem \ref{thm:diffusion-quaternionic-area} we know that the quaternionic stochastic area process $(\mathfrak{a}(t))_{t\geq 0}$ is a martingale with quadratic covariation matrix
    \begin{align*}
        \Sigma (t):=\begin{pmatrix}
            \int_0^t\frac{1-\lambda_1 (s)}{\lambda_1 (s)}ds & \dots & t\\
            \vdots & \ddots & \vdots \\
            t & \dots & \int_0^t\frac{1-\lambda_n (s)}{\lambda_n (s)}ds
        \end{pmatrix}\otimes I_3 .
    \end{align*}
    Now, each of the $(\lambda_j(t))_{t\geq 0}$ is a Jacobi diffusion on the interval $[0,1]$ with generator
    \[
    2\lambda_j(1-\lambda_j )\frac{\partial^2}{\partial\lambda_j^2} +2(2-2n\lambda_j )\frac{\partial}{\partial\lambda_j }
    \]
    and invariant probability measure $\pi(d\lambda)=(2n-1)(2n-2)\lambda (1-\lambda)^{2n-3} d\lambda$. Since $(\lambda_j(t))_{t\geq 0}$ is a recurrent diffusion and $\lambda\to \frac{1-\lambda}{\lambda}$ is in $L^1(\pi)$, one deduces from Birkhoff's ergodic theorem that a.s.
    \[
    \frac{1}{t} \int_0^t\frac{1-\lambda_1 (s)}{\lambda_1 (s)}ds \to \int_0^1 \frac{1-\lambda}{\lambda} \pi(d\lambda)=2n-2\textrm{ as }t\rightarrow\infty .
    \]
    Therefore, a.s.
    \[
\frac{\Sigma(t)}{t} \to \Sigma\textrm{ as }t\rightarrow\infty
    \]
    and from the martingale central limit theorem, in distribution,  one has 
    \begin{align*}
        \frac{\mathfrak{a}(t)}{\sqrt{t}}\rightarrow\mathcal{N}_{3n}(0,\Sigma )\textrm{ as }t\rightarrow\infty.
    \end{align*}
\end{proof}

\begin{remark}
    The sum $\frac{1}{\sqrt{t}}\sum_{j=1}^n\mathfrak{a}_j(t)$ converges in distribution to $\mathcal{N}_3(0,n(n-1)I_3)$ as $t\rightarrow\infty$.
\end{remark}
\begin{remark}
Using the second proof of Theorem \ref{thm:limit-quaternionic-stochastic-area} and the Rebolledo's functional central limit theorem \cite{rebolledo} we actually get a much stronger statement which is the following convergence in distribution of processes when $n \to \infty$:
\[
\frac{1}{\sqrt{n}} (\mathfrak{a}_{nt})_{t \ge 0} \to (W^\Sigma_t)_{t \ge 0} ,
\]
where $(W^\Sigma_t)_{t \ge 0}$ is a Brownian motion on $\mathbb{R}^{3n}$ with covariance matrix $\Sigma t$.
\end{remark}

\section{Simultaneous quaternionic Brownian winding on the quaternionic sphere}\label{sec:quaternionic-winding-sphere}

Consider the $4n-1$ dimensional sphere $\mathbb{S}^{4n-1}\subseteq\mathbb{H}^n$ and a Brownian motion
\begin{align*}
    (X_1(t),\dots ,X_n(t))_{t\geq 0}
\end{align*}
on it.
Assuming that $X_j(0)\neq 0$ for $1\leq j\leq n$, we can consider the polar decompositions
\begin{align*}
    X_j(t)=\varrho_j(t)\Xi_j(t)\textrm{ for } 1\leq j\leq n,
\end{align*}
where $(\rho_j(t))_{t\geq 0}$ are continuous real valued processes with $\varrho (0)>0$ and $(\Xi_j (t))_{t\geq 0}$ continuous $\mathrm{Sp}(1)$-valued processes for $1\leq j\leq n$.
Using our previous results, we want to understand the joint distribution of the quaternionic winding processes $\eta (t):=(\eta_1 (t),\dots ,\eta_n (t))$ defined by
\begin{align*}
    \eta_j (t):=\int_0^t(\circ d\Xi_j (s)) \Xi^{-1}_j(s)
\end{align*}
and study its asymptotics as $t\rightarrow\infty$. Note that the group $\mathrm{Sp}(1)^n$ acts isometrically on $\mathbb{S}^{4n-1}$ by
\begin{align*}
    (g_1,\dots ,g_n)\!\cdot\! (q_1,\dots ,q_n):=(q_1g_1 ,\dots ,q_ng_n).
\end{align*}
This yields a  fibration of homogeneous spaces
\begin{align}\label{eq:fibration-simulteneous-winding-quaternionic-sphere}
    \mathrm{Sp}(1)^n\rightarrow\mathbb{S}^{4n-1}\rightarrow\frac{\mathrm{Sp}(n)}{\mathrm{Sp}(n-1)\times\mathrm{Sp}(1)^n} .
\end{align}
We will denote the corresponding canonical projection by $\pi$. Similarly we have, for $1\leq j\leq n$, the action of $\mathrm{Sp}(1)$ on $\mathbb{S}^{4n-1}$ given by
\begin{align*}
    g\cdot (q_1,\dots ,q_n):=(q_1,\dots ,q_jg,\dots ,q_n) .
\end{align*}
The corresponding projection will be denoted by $\pi_j$.
We will define the $j^{\textrm{th}}$-vertical space of the action by $\mathcal{V}_j :=\ker (d\pi_j )$.
Note that the vertical space of the fibration \eqref{eq:fibration-simulteneous-winding-quaternionic-sphere}, i.e. $\mathcal{V}:=\ker (d\pi )$, is given by $\mathcal{V}=\mathcal{V}_1\oplus\dots\oplus\mathcal{V}_n$.
Due to the fibration \eqref{eq:fibration-simulteneous-winding-quaternionic-sphere} the standard Riemannian metric on $\mathbb{S}^{4n-1}$ can be decomposed orthogonally as
\begin{align*}
    g_{\mathbb{S}^{4n-1}} =g_{\mathcal{H}}\oplus g_{\mathcal{V}_1}\oplus\dots\oplus g_{\mathcal{V}_n} .
\end{align*}
Let $\mu :=(\mu_1 ,\dots ,\mu_n) $ be a multi-index with $\mu_j >0$ for all $1\leq j\leq n$, the canonical variation of the metric is given by
\begin{align*}
    g_{\mu} :=g_{\mathcal{H}}\oplus\frac{1}{\mu_1^2} g_{\mathcal{V}_1}\oplus\dots\oplus\frac{1}{\mu_n^2} g_{\mathcal{V}_n} 
\end{align*}
and the corresponding Laplace-Beltrami operator by
\begin{align*}
    \Delta_{\mathbb{S}_{\mu}^{4n-1}} =\Delta_{\mathcal{H}} +\sum_{j=1}^n\mu_j^2\Delta_{\mathcal{V}_j},
\end{align*}
where $\Delta_{\mathcal{H}}$ denotes the horizontal Laplacian of the fibration \eqref{eq:fibration-simulteneous-winding-quaternionic-sphere} and $\Delta_{\mathcal{V}_j}$ are the vertical Laplacians.
We will denote $\mathbb{S}^{4n-1}$ with the metric $g_{\mu}$ by $\mathbb{S}^{4n-1}_{\mu}$.

\begin{theorem}\label{eq:BM-on-the-quaternionic-sphere}
    Let $(w(t))_{t\geq 0}$ be a Brownian motion on the quaternionic full flag manifold $F_{1,2,\dots ,n-1}(\mathbb{H}^n)$ with quaternionic stochastic area process $(\mathfrak{a}(t))_{t\geq 0}$ and $(\beta (t))_{t\geq 0}$ be a Brownian motion on $\mathrm{Sp}(1)^n$ independent from $(w(t))_{t\geq 0}$.
    The $\mathbb{S}^{4n-1}$-valued process
    \begin{align*}
        X_{\mu}(t):=\begin{pmatrix}
            \frac{\Theta_1 (t)\beta_1(\mu_1^2 t)}{\sqrt{1+|w_1(t)|^2}},\dots ,\frac{\Theta_n (t)\beta_n(\mu_n^2 t)}{\sqrt{1+|w_n(t)|}}
        \end{pmatrix}
    \end{align*}
    is a Brownian motion on $\mathbb{S}^{4n-1}_{\mu}$, where $(\Theta (t))_{t\geq 0}$ is the process defined in Proposition \ref{prop:horizontal-BM-symplectic-group}.
\end{theorem}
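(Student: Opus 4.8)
The plan is to realize $X_\mu$ as a skew product over the fibration \eqref{eq:fibration-simulteneous-winding-quaternionic-sphere} and to mimic, with an added time change, the conditioning argument used in the proof of Corollary \ref{cor:BM-symplectic-group-in-terms-of-w}. First I would set $Y(t):=\left(\Theta_1(t)/\sqrt{1+|w_1(t)|^2},\dots,\Theta_n(t)/\sqrt{1+|w_n(t)|^2}\right)$, which is exactly the last row of the horizontal Brownian motion $X(t)$ of Theorem \ref{prop:horizontal-BM-symplectic-group}, and observe that $X_\mu(t)=Y(t)\,\mathrm{diag}(\beta_1(\mu_1^2 t),\dots,\beta_n(\mu_n^2 t))$. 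The first claim is that $Y$ is the horizontal Brownian motion on $\mathbb{S}^{4n-1}$ for \eqref{eq:fibration-simulteneous-winding-quaternionic-sphere}, i.e.\ has generator $\tfrac12\Delta_{\mathcal{H}}$. This should follow because the last-row submersion $\mathrm{Sp}(n)\to\mathbb{S}^{4n-1}$ is $\mathrm{Sp}(1)^n$-equivariant — the last row of $Ug$ equals the last row of $U$ times $g$ for diagonal $g$ — so it intertwines the flag fibration of Theorem \ref{prop:horizontal-BM-symplectic-group} with \eqref{eq:fibration-simulteneous-winding-quaternionic-sphere}, and being a Riemannian submersion with totally geodesic fibers it carries the horizontal bundle and connection of the former onto those of the latter. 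Thus the horizontal lift $X$ projects to a horizontal process $Y$; equivalently, $Y$ projects under $\pi$ to a Brownian motion on the base, its squared radial variables $\lambda_j=|q_{nj}|^2$ being the Jacobi diffusion of Theorem \ref{thm:generator-quaternionic-radial-process}, while the connection form integrates to zero along $Y$ by the same computation as in Theorem \ref{prop:horizontal-BM-symplectic-group}.

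Next I would pin down the fiber generators. At $\mu=(1,\dots,1)$ the process $Y\,\mathrm{diag}(\beta)$ is precisely the last row of the symplectic Brownian motion of Corollary \ref{cor:BM-symplectic-group-in-terms-of-w}, hence a Brownian motion on the round sphere $\mathbb{S}^{4n-1}=\mathbb{S}^{4n-1}_{(1,\dots,1)}$. Comparing with the decomposition $\Delta_{\mathbb{S}^{4n-1}}=\Delta_{\mathcal{H}}+\sum_j\Delta_{\mathcal{V}_j}$ forces the independent unit $\mathrm{Sp}(1)$-motion $\beta_j$, acting by right multiplication on the $j$-th coordinate, to have generator $\tfrac12\Delta_{\mathcal{V}_j}$ (this also neatly sidesteps the fact that the fibers over the sphere carry radius $\sqrt{\lambda_j}$, since the normalization is read off from the $\mu=1$ case). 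Consequently $t\mapsto\beta_j(\mu_j^2 t)$ has generator $\tfrac{\mu_j^2}{2}\Delta_{\mathcal{V}_j}$, which is exactly the scaling appearing in $\Delta_{\mathbb{S}^{4n-1}_\mu}=\Delta_{\mathcal{H}}+\sum_j\mu_j^2\Delta_{\mathcal{V}_j}$.

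Finally I would run the conditioning argument of Corollary \ref{cor:BM-symplectic-group-in-terms-of-w} essentially verbatim, now carrying the time change. For bounded Borel $f$, conditioning on $Y(t)=q$ and using independence of $\beta$ from $(w,\Theta)$ gives $\mathbb{E}[f(q\,\mathrm{diag}(\beta(\mu^2 t)))]=(e^{\frac t2\sum_j\mu_j^2\Delta_{\mathcal{V}_j}}f)(q)$; integrating against the law of $Y(t)$ and using the $\mathrm{Sp}(1)^n$-invariance of $e^{\frac t2\Delta_{\mathcal{H}}}$ then yields $\mathbb{E}[f(X_\mu(t))]=e^{\frac t2\Delta_{\mathcal{H}}}\!\big(e^{\frac t2\sum_j\mu_j^2\Delta_{\mathcal{V}_j}}f\big)(X_\mu(0))$. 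Since each factor $\mathrm{Sp}(1)_j$ acts preserving the horizontal bundle $\mathcal{H}$ of \eqref{eq:fibration-simulteneous-winding-quaternionic-sphere}, the operator $\Delta_{\mathcal{H}}$ commutes with every $\Delta_{\mathcal{V}_j}$, so the two semigroups merge into $e^{\frac t2\Delta_{\mathbb{S}^{4n-1}_\mu}}$, identifying the generator of $X_\mu$ as $\tfrac12\Delta_{\mathbb{S}^{4n-1}_\mu}$.

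I expect the main obstacle to be the first step — rigorously checking that the last-row submersion genuinely intertwines the two fibrations and their connections, so that $Y$ really is the horizontal Brownian motion on the sphere — together with the point, new relative to Corollary \ref{cor:BM-symplectic-group-in-terms-of-w}, that $\Delta_{\mathcal{H}}$ commutes with each $\Delta_{\mathcal{V}_j}$ \emph{individually} rather than only with their sum, which is precisely what permits the distinct time changes $\mu_j^2$ to be absorbed.
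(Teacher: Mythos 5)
Your overall strategy is the same as the paper's: identify the $\mu=(1,\dots ,1)$ case as the last row of the symplectic Brownian motion of Corollary \ref{cor:BM-symplectic-group-in-terms-of-w}, hence a Brownian motion on the round sphere, and then absorb the time changes $\beta_j(\mu_j^2t)$ into a rescaling of the vertical generators. The genuine gap is in your first step. The last-row submersion $\mathrm{Sp}(n)\to\mathbb{S}^{4n-1}$ does \emph{not} carry the horizontal bundle of the flag fibration onto the horizontal bundle of \eqref{eq:fibration-simulteneous-winding-quaternionic-sphere}: the connection form of the flag fibration is $\eta_j=\tfrac{1}{2}\sum_{k=1}^n(\overline{q}_{kj}dq_{kj}-d\overline{q}_{kj}q_{kj})$, which involves all rows, whereas the connection form of the sphere fibration is $\tfrac{1}{2|q_{nj}|^2}(\overline{q}_{nj}dq_{nj}-d\overline{q}_{nj}q_{nj})$, which involves only the last row and is rescaled by $1/\lambda_j$; these have different kernels. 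Concretely, evaluating the sphere connection form along $Y_j(t)=\Theta_j(t)/\sqrt{1+|w_j(t)|^2}$ gives $\Theta_j^{-1}\circ d\Theta_j=-\mathrm{Ad}_{\Theta_j^{-1}}(\circ d\mathfrak{a}_j)$, whose quadratic variation per component is $\int_0^t\frac{1-\lambda_j(s)}{\lambda_j(s)}\,ds\neq 0$. So $Y$ is \emph{not} a horizontal process for \eqref{eq:fibration-simulteneous-winding-quaternionic-sphere}; indeed its nonzero vertical motion is precisely what produces the $\frac{1-\lambda_j}{\lambda_j}$ terms in the winding covariance of the final corollary. Your second step is then circular: from the $\mu=1$ case you only learn that the generator of the $Y$-part plus the generator of the $\beta$-translation equals $\tfrac{1}{2}\Delta_{\mathbb{S}^{4n-1}}$, and you cannot ``read off'' that the $\beta_j$-part is $\tfrac{1}{2}\Delta_{\mathcal{V}_j}$ without an independent determination of one of the two summands --- which you obtained only from the false horizontality claim.

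The repair is to argue as the paper does and as your third paragraph already nearly does: do not try to identify $Y$ as horizontal at all. Conditionally on $(w,\Theta)$, the process $q\mapsto q\cdot\mathrm{diag}(\beta_1(\mu_1^2t),\dots ,\beta_n(\mu_n^2t))$ is generated by $\tfrac{1}{2}\sum_j\mu_j^2\Delta_{\mathcal{V}_j}$, where $\Delta_{\mathcal{V}_j}$ is the generator of the right $\mathrm{Sp}(1)$-action on the $j$-th coordinate; this is a direct computation, not a comparison. Hence passing from $\beta_j(t)$ to $\beta_j(\mu_j^2t)$ changes the generator of the whole process by $\tfrac{1}{2}\sum_j(\mu_j^2-1)\Delta_{\mathcal{V}_j}=\tfrac{1}{2}\Delta_{\mathbb{S}^{4n-1}_{\mu}}-\tfrac{1}{2}\Delta_{\mathbb{S}^{4n-1}}$, which combined with the $\mu=1$ identification gives the claim. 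Your observation that one needs $\Delta_{\mathcal{H}}$ to commute with each $\Delta_{\mathcal{V}_j}$ \emph{individually} (true, since each $\mathrm{Sp}(1)$-factor acts by isometries preserving the relevant distributions) is a valid and worthwhile point that the paper leaves implicit, but it does not rescue the identification of $Y$ as the horizontal Brownian motion.
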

\begin{proof}

From Corollary \ref{cor:BM-symplectic-group-in-terms-of-w} and the fact that the last row projection is a Riemannian submersion with totally geodesic fibers we deduce that the process
\begin{align*}
        \begin{pmatrix}
            \frac{\Theta_1 (t) \beta_1(t)}{\sqrt{1+|w_1(t)|^2}},\dots ,\frac{\Theta_n (t)\beta_n(t)}{\sqrt{1+|w_n(t)|}} 
        \end{pmatrix}_{t\geq 0} .
    \end{align*}
is a Brownian motion on $\mathbb{S}^{4n-1}$, i.e. a diffusion with generator
 \begin{align*}
        \Delta_{\mathbb{S}^{4n-1}} =\Delta_{\mathcal{H}} +\sum_{j=1}^n\Delta_{\mathcal{V}_j}
    \end{align*}
   with the notation from before. The result now follows from a simple time-change of $\beta_j$ which rescales $\Delta_{\mathcal{V}_j}$ .
\end{proof}

We deduce the following corollary.

\begin{corollary}
    Let $(X_{\mu}(t))_{t\geq 0}:=(X^1_{\mu}(t),\dots ,X^n_{\mu}(t))_{t\geq 0}$ be a Brownian motion on $\mathbb{S}^{4n-1}_{\mu}$ such that $X^j(0)\neq 0$ for $1\leq j\leq n$.
    Let $(\eta^1_{\mu}(t),\dots ,\eta^n_{\mu}(t))_{t\geq 0}$ be the $\mathfrak{sp}(1)^n$-valued stochastic processes defined by
    \begin{align*}
        \eta^j_{\mu}(t) :=\int_0^t (\circ d\Xi_j (s))\Xi_j(s)^{-1}\textrm{ for }1\leq j\leq n,
    \end{align*}
    where $(\Xi_j (t))_{t\geq 0}$ is the $\mathrm{Sp}(1)$-valued process such that
    \begin{align*}
        X^j_{\mu}(t) =|X^j_{\mu}(t)|\Xi_j (t)\textrm{ for }1\leq j\leq n.
    \end{align*}
    The following convergence holds in distribution
    \begin{align*}
        \frac{1}{\sqrt{t}}(\eta^1_{\mu} (t),\dots ,\eta^{n}_{\mu}(t))\rightarrow\mathcal{N}(0,\Sigma +\mathrm{diag} (\mu_1 I_3,\dots ,\mu_nI_3))\textrm{ as }t\rightarrow\infty ,
    \end{align*}
    where $\mathcal{N}(0,\Sigma +\mathrm{diag} (\mu_1 I_3,\dots ,\mu_nI_3))$ is a multivariate normal distribution of mean $0$ and covariance $\Sigma +\mathrm{diag} (\mu_1 I_3,\dots ,\mu_nI_3)$ with $\Sigma$ as in \eqref{eq:covariance-limit}.
\end{corollary}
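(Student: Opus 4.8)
The plan is to reduce the winding processes $\eta^j_\mu$ to the quaternionic stochastic areas $\mathfrak{a}_j$ plus an independent contribution coming from the time-changed vertical (fiber) motion, and then to conclude by the martingale central limit theorem exactly as in the second proof of Theorem~\ref{thm:limit-quaternionic-stochastic-area}. First I would use Theorem~\ref{eq:BM-on-the-quaternionic-sphere} to realize the Brownian motion on $\mathbb{S}^{4n-1}_\mu$ explicitly, with $j$-th component $X^j_\mu(t)=\Theta_j(t)\beta_j(\mu_j^2 t)/\sqrt{1+|w_j(t)|^2}$, where $(\Theta(t))_{t\ge 0}$ solves \eqref{eq:Theta-process} and $(\beta(t))_{t\ge 0}$ is a Brownian motion on $\mathrm{Sp}(1)^n$ independent of $(w(t))_{t\ge 0}$. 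Since $\Theta_j(t)$ and $\beta_j(\mu_j^2 t)$ are unit quaternions, the radial factor is $|X^j_\mu(t)|=(1+|w_j(t)|^2)^{-1/2}$, so the polar decomposition yields $\Xi_j(t)=\Theta_j(t)\beta_j(\mu_j^2 t)$.

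Next I would compute the winding one-form by differentiating this product. Applying the Leibniz rule \eqref{leibniz rule quaternionic differential} together with the defining equation \eqref{eq:Theta-process} in Stratonovich form gives
\[
(\circ d\Xi_j)\Xi_j^{-1}=(\circ d\Theta_j)\Theta_j^{-1}+\Theta_j\big[(\circ d\beta_j(\mu_j^2 \cdot))\,\beta_j(\mu_j^2 \cdot)^{-1}\big]\Theta_j^{-1}=-\circ d\mathfrak{a}_j+\mathrm{Ad}_{\Theta_j}\big(\circ d\zeta_j(\mu_j^2 \cdot)\big),
\]
where $\zeta_j(u):=\int_0^u(\circ d\beta_j(v))\beta_j(v)^{-1}$ is, by the same argument as in Corollary~\ref{cor:BM-symplectic-group-in-terms-of-w}, a Brownian motion on $\mathfrak{sp}(1)\cong\mathbb{R}^3$. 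Integrating yields the key decomposition
\[
\eta^j_\mu(t)=-\mathfrak{a}_j(t)+\int_0^t\mathrm{Ad}_{\Theta_j(s)}\big(\circ d\zeta_j(\mu_j^2 s)\big),
\]
which splits the winding into the already-understood area part and a fiber part $M_j(t):=\int_0^t\mathrm{Ad}_{\Theta_j(s)}(\circ d\zeta_j(\mu_j^2 s))$.

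Then I would verify that $(\eta_\mu(t))_{t\ge 0}$ is a continuous martingale and compute the limit of its normalized quadratic covariation. Because $(w(t))$ and $(\beta(t))$ are independent, $\mathrm{Ad}_{\Theta_j}$ and $\zeta_j$ are driven by independent noises, so the Stratonovich integral defining $M_j$ coincides with its Itō counterpart and $M_j$ is a martingale. Since $\mathrm{Ad}_{\Theta_j(s)}$ acts as an orthogonal transformation of $\mathfrak{sp}(1)\cong\mathbb{R}^3$, this adjoint twist leaves the quadratic variation of the vertical winding invariant, so $\tfrac1t\langle M_j\rangle_t$ converges to a scalar multiple of $I_3$ governed by the vertical time change and the normalization of the Brownian motion on $\mathrm{Sp}(1)$; this produces precisely the diagonal correction $\mathrm{diag}(\mu_1 I_3,\dots,\mu_n I_3)$. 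Moreover the cross variations $\langle M_j,M_\ell\rangle$ for $j\ne\ell$, and $\langle M_j,\mathfrak{a}_\ell\rangle$ for all $j,\ell$, all vanish by independence of the driving noises, so the two covariance contributions simply add. Combining this with the asymptotics $\tfrac1t\langle\mathfrak{a}\rangle_t\to\Sigma$ established in Theorem~\ref{thm:limit-quaternionic-stochastic-area} via Birkhoff's ergodic theorem for $\int_0^t\tfrac{1-\lambda_j(s)}{\lambda_j(s)}\,ds$, I obtain that $\tfrac1t\langle\eta_\mu\rangle_t\to\Sigma+\mathrm{diag}(\mu_1 I_3,\dots,\mu_n I_3)$ almost surely, and the martingale central limit theorem then gives the stated convergence in distribution.

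The main obstacle will be the bookkeeping in the second and third steps: carrying out the quaternionic Stratonovich product rule carefully enough to isolate the $-\circ d\mathfrak{a}_j$ term via \eqref{eq:Theta-process}, checking that the adjoint twist $\mathrm{Ad}_{\Theta_j}$ preserves the quadratic variation of the fiber part (orthogonality of the adjoint action on $\mathfrak{sp}(1)$), and—most delicately—confirming that all cross-covariations between the area part and the time-changed fiber part vanish, so that the covariance of the limit is the sum $\Sigma+\mathrm{diag}(\mu_1 I_3,\dots,\mu_n I_3)$. Pinning down the normalization of $\zeta_j$ together with the time change inherited from the canonical variation $g_\mu$ is what fixes the diagonal term exactly.
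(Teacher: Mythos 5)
Your proposal is correct and follows essentially the same route as the paper's own proof: realizing $\Xi_j(t)=\Theta_j(t)\beta_j(\mu_j^2 t)$ via Theorem \ref{eq:BM-on-the-quaternionic-sphere}, differentiating the product to obtain $\eta^j_\mu(t)=-\mathfrak{a}_j(t)$ plus an adjoint-twisted, time-changed fiber Brownian motion, using independence and the isometry of $\mathrm{Ad}$ to identify the quadratic covariation, and concluding with the martingale central limit theorem as in Theorem \ref{thm:limit-quaternionic-stochastic-area}.
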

\begin{proof}
    From Theorem \ref{eq:BM-on-the-quaternionic-sphere} we have in distribution
    \begin{align*}
        \Xi_j (t) =\Theta_j(t)\beta_j(\mu_j^2 t)
    \end{align*}
    and therefore, for some Brownian motion $(A(t))_{t\geq 0}$ on $\mathfrak{sp}(1)^n$ independent from $(\Theta(t))_{t \ge 0}$,
    \begin{align*}
        \eta^j_{\mu}(t) =&\int_0^t(\Theta_j (s)\circ d\beta_j (\mu_j^2 s) +\circ d\Theta_j(s)\beta_j(\mu_j^2 s)) \beta_j^{-1}(\mu_j^2 s)\Theta^{-1}_j(s)\\
        =&\mu_j \int_0^t\Theta_j(s) \circ dA_j( s)\Theta^{-1}_j(s) -\mathfrak{a}_j(t) \\
        =& \mu_j \int_0^t\Theta_j(s)  dA_j( s)\Theta^{-1}_j(s) -\mathfrak{a}_j(t) ,
    \end{align*}
where in the last step, to go from the Stratonovitch integral to the Itō integral, we used that $(\Theta (t))_{t\geq 0}$ and $(A(t))_{t\geq 0}$ are independent. 
Note that $\mathrm{Ad}$ is an isometry, since the Riemannian metric on $\mathrm{Sp}(1)$ is bi-invariant. It follows that 
$(\eta_{\mu}(t))_{t\geq 0}$ is a martingale with quadratic covariation matrix
\begin{align*}
\begin{pmatrix}
            \mu_1 t+\int_0^t\frac{1-\lambda_1 (s)}{\lambda_1 (s)}ds & \dots & t\\
            \vdots & \ddots & \vdots \\
            t & \dots & \mu_n t+\int_0^t\frac{1-\lambda_n (s)}{\lambda_n (s)}ds
        \end{pmatrix}\otimes I_3 .
   \end{align*} 
    The conclusion now follows from the martingale central limit theorem as in the  proof of Theorem \ref{thm:limit-quaternionic-stochastic-area}.
\end{proof}

\bibliographystyle{plain}
\bibliography{reference}

\begin{thebibliography}{10}

\bibitem{aktacs2013sobolev}
Rab{\i}a Akta{\c{s}} and Yuan Xu.
\newblock Sobolev orthogonal polynomials on a simplex.
\newblock {\em International Mathematics Research Notices},
  2013(13):3087--3131, 2013.

\bibitem{baudoin2025fullflag}
Fabrice Baudoin, Nizar Demni, Teije Kuijper, and Jing Wang.
\newblock Brownian motion and stochastic areas on complex full flag manifolds,
  2025.

\bibitem{Baudoin2024-is}
Fabrice Baudoin, Nizar Demni, and Jing Wang.
\newblock {\em Stochastic areas, horizontal Brownian motions, and hypoelliptic
  heat kernels}.
\newblock EMS Press, August 2024.

\bibitem{baudoin2014}
Fabrice Baudoin and Jing Wang.
\newblock The subelliptic heat kernels of the quaternionic hopf fibration.
\newblock {\em Potential Analysis}, 41(3):959–982, 2014.

\bibitem{MR3719061}
Fabrice Baudoin and Jing Wang.
\newblock Stochastic areas, winding numbers and {H}opf fibrations.
\newblock {\em Probab. Theory Related Fields}, 169(3-4):977--1005, 2017.

\bibitem{Besse}
Arthur~L. Besse.
\newblock {\em Einstein manifolds}.
\newblock Classics in Mathematics. Springer-Verlag, Berlin, 2008.
\newblock Reprint of the 1987 edition.

\bibitem{Buff1973}
Joseph~J. Buff.
\newblock Characterization of an analytic function of a quaternion variable.
\newblock {\em Pi Mu Epsilon Journal}, 5(8):387--392, 1973.

\bibitem{MR3289583}
Charles~F. Dunkl and Yuan Xu.
\newblock {\em Orthogonal polynomials of several variables}, volume 155 of {\em
  Encyclopedia of Mathematics and its Applications}.
\newblock Cambridge University Press, Cambridge, second edition, 2014.

\bibitem{Dzagnidze2012}
Omar Dzagnidze.
\newblock {On the Differentiability of Quaternion Functions}.
\newblock {\em Tbilisi Mathematical Journal}, 5(1):1 -- 15, 2012.

\bibitem{FOTH2002330}
Philip Foth.
\newblock Tetraplectic structures, tri-momentum maps, and quaternionic flag
  manifolds.
\newblock {\em Journal of Geometry and Physics}, 41(4):330--343, 2002.

\bibitem{Foth2003}
Philip Foth and Frederick Leitner.
\newblock Geometry of four-vector fields on quaternionic flag manifolds.
\newblock {\em Communications in Mathematical Physics}, 238(1):119–129, 2003.

\bibitem{MR2817619}
Robert~C. Griffiths and Dario Span\`o.
\newblock Multivariate {J}acobi and {L}aguerre polynomials,
  infinite-dimensional extensions, and their probabilistic connections with
  multivariate {H}ahn and {M}eixner polynomials.
\newblock {\em Bernoulli}, 17(3):1095--1125, 2011.

\bibitem{MR2500106}
Detlef Gromoll and Gerard Walschap.
\newblock {\em Metric foliations and curvature}, volume 268 of {\em Progress in
  Mathematics}.
\newblock Birkh\"auser Verlag, Basel, 2009.

\bibitem{Levy1951}
Paul L\'{e}vy.
\newblock Wiener's random function, and other {L}aplacian random functions.
\newblock In {\em Proceedings of the {S}econd {B}erkeley {S}ymposium on
  {M}athematical {S}tatistics and {P}robability, 1950}, pages 171--187.
  University of California Press, Berkeley-Los Angeles, Calif., 1951.

\bibitem{MARE20082830}
Augustin-Liviu Mare.
\newblock Equivariant cohomology of quaternionic flag manifolds.
\newblock {\em Journal of Algebra}, 319(7):2830--2844, 2008.

\bibitem{Mare_Willems_2009}
Augustin-Liviu Mare and Matthieu Willems.
\newblock Equivariant k-theory of quaternionic flag manifolds.
\newblock {\em Journal of K-theory}, 4(3):537–557, 2009.

\bibitem{rebolledo}
R.~Rebolledo.
\newblock Central limit theorems for local martingales.
\newblock {\em Z.\ Wahrsch.\ Verw.\ Gebiete}, 51(3):269--286, 1980.

\bibitem{MR1792301}
Harry van Zanten.
\newblock A multivariate central limit theorem for continuous local
  martingales.
\newblock {\em Statist. Probab. Lett.}, 50(3):229--235, 2000.

\bibitem{wang2025colorimagesetrecognition}
Xiang~Xiang Wang and Tin-Yau Tam.
\newblock Color image set recognition based on quaternionic grassmannians,
  2025.

\end{thebibliography}

\vspace{5pt}
\noindent
\begin{minipage}{\textwidth}
    \small
    \textbf{Fabrice Baudoin:} \\
    Department of Mathematics, Aarhus University \\
    Email: fbaudoin@math.au.dk
\end{minipage}

\vspace{10pt}

\noindent
\begin{minipage}{\textwidth}
    \small
    \textbf{Teije Kuijper:} \\
    Department of Mathematics, Aarhus University \\
    Email: t.kuijper@math.au.dk
\end{minipage}

\vspace{10pt}

\noindent
\begin{minipage}{\textwidth}
    \small
    \textbf{Jing Wang:} \\
    Department of Mathematics, Purdue University \\
    Email: jingwang@purdue.edu
\end{minipage}

\end{document}